\long\def\@savemarbox#1#2{\global\setbox#1\vtop{\hsize\marginparwidth 
  \@parboxrestore\tiny\raggedright #2}}
\renewcommand*{\backref}[1]{}
\renewcommand*{\backrefalt}[4]{
  \ifcase #1
  [No citations.]
  \or [#2]
  \else [#2]
  \fi }
   \def\MR#1{}
\numberwithin{equation}{section}
\theoremstyle{plain}
\newtheorem{theorem}[equation]{Theorem}
\newtheorem{lemma}[equation]{Lemma}
\newtheorem{corollary}[equation]{Corollary}
\newtheorem{proposition}[equation]{Proposition}
\newtheorem*{namedtheorem}{\theoremname}
\newcommand{\theoremname}{testing}
\newenvironment{named}[1]{\renewcommand{\theoremname}{#1}\begin{namedtheorem}}{\end{namedtheorem}}
\theoremstyle{definition}
\newtheorem{definition}[equation]{Definition}
\newtheorem{remark}[equation]{Remark}
\newtheorem{question}[equation]{Question}
\newcommand{\from}{\colon} 
\newcommand{\HH}{{\mathbb{H}}}
\newcommand{\RR}{{\mathbb{R}}}
\newcommand{\ZZ}{{\mathbb{Z}}}
\newcommand{\CC}{{\mathbb{C}}}
\newcommand{\R}{{\mathbb{R}}}
\newcommand{\Z}{{\mathbb{Z}}}
\newcommand{\C}{{\mathbb{C}}}
\newcommand{\calC}{{\mathcal{C}}}
\newcommand{\calH}{{\mathcal{H}}}
\newcommand{\calR}{{\mathcal{R}}}
\newcommand{\bdy}{\partial}
\renewcommand{\setminus}{{\smallsetminus}}
\newcommand{\PSL}{\operatorname{PSL}}
\newcommand{\UHS}{\HH^3}
\newcommand{\AH}{\operatorname{AH}}
\newcommand{\MP}{\operatorname{MP}}
\newcommand{\refthm}[1]{Theorem~\ref{Thm:#1}}
\newcommand{\reflem}[1]{Lemma~\ref{Lem:#1}}
\newcommand{\refprop}[1]{Proposition~\ref{Prop:#1}}
\newcommand{\refcor}[1]{Corollary~\ref{Cor:#1}}
\newcommand{\refdef}[1]{Definition~\ref{Def:#1}}
\newcommand{\reffig}[1]{Figure~\ref{Fig:#1}}
\title{Cusp shape and tunnel number}
\author{Vinh Dang}
\address{Lone Star College, North Harris, Houston, TX 77073, USA} 
\email{vinh.x.dang@lonestar.edu}
\author{Jessica S. Purcell}
\address{School of Mathematical Sciences, Monash University, VIC 3800, Australia}
\email{jessica.purcell@monash.edu}
\subjclass[2010]{57M50, 57M27, 30F40}
\begin{document}

\begin{abstract}
We show that the set of cusp shapes of hyperbolic tunnel number one manifolds is dense in the Teichm\"uller space of the torus. A similar result holds for tunnel number~$n$ manifolds. As a consequence, for fixed $n$, there are infinitely many hyperbolic tunnel number~$n$ manifolds with at most one exceptional Dehn filling. This is in contrast to large volume Berge knots, which are tunnel number one manifolds, but with cusp shapes converging to a single point in Teichm\"uller space. 
\end{abstract}

\maketitle

\section{Introduction}\label{Sec:Intro}
This paper is concerned with two invariants for a 3-manifold with torus boundary.

The first is geometric. If the interior of the 3-manifold admits a hyperbolic structure, then a tubular neighborhood of the torus boundary inherits a Euclidean structure, and this Euclidean structure is unique up to similarity of the torus. The similarity class of such a Euclidean structure is called the \emph{cusp shape} of the 3-manifold. Manifolds with restricted cusp shapes have restrictions on their Dehn fillings, for example by the $2\pi$-theorem~\cite{BleilerHodgson} or the 6-Theorem~\cite{Agol:Bounds, Lackenby:Word}. There are uncountably many similarity classes of Euclidean metrics on the torus, but only countably many finite volume hyperbolic 3-manifolds. Thus not all similarity classes of metrics can appear as cusp shapes of hyperbolic 3-manifolds. Nevertheless, in~\cite{Nim-94}, Nimershiem proved that the set of similarity classes occurring as cusp shapes of any hyperbolic 3-manifold is dense. Does the set of similarity classes occurring as cusp shapes remain dense when other restrictions are put on the manifold? 

The second notion is topological. Every 3-manifold $M$ admits a Heegaard splitting: a decomposition along an embedded Heegaard surface into two handlebodies, or compression bodies if $M$ has boundary. Frequently, we would like to know the minimal genus over all possible Heegaard splittings of $M$. If $M$ has a single torus boundary component, a genus $g$ Heegaard splitting gives a decomposition of $M$ into a genus $g$ handlebody and a compression body homeomorphic to the union of $T^2\times I$ and $g-1$ one-handles; see \refdef{ComprBody} below. In this case, the core of each one-handle of the compression body is called a \emph{tunnel}; the union of the $g-1$ tunnels in $M$ is a \emph{tunnel system for $M$}. A 3-manifold with one torus boundary component and a minimal genus $g$ Heegaard splitting is called a \emph{tunnel-number $g-1$} manifold. For hyperbolic manifolds, the tunnel number is known to give a lower bound on hyperbolic volume~\cite{Thurston-79}.

A natural question to ask is whether there is a relationship between cusp shape and tunnel number, or whether restrictions on tunnel number give restrictions on cusp shape. In the case of tunnel number one, work of Baker on Berge knots~\cite{Baker:LargeVolBerge} implies that there is an infinite sequence of cusped hyperbolic tunnel number one manifolds with cusp shape approaching that of a $1\times 2$ rectangle, up to scale. Do all hyperbolic tunnel number one manifolds have a restricted set of cusp shapes in their limit set? In this paper, we show they do not. In fact, the set of cusp shapes is dense in the Teichm\"uller space of the torus. 

\begin{theorem}\label{Thm:Tunnel1CuspShapes}
For any $\epsilon>0$ and any point $[T]$ in the Teichm\"uller space of the torus, there exists a complete, finite volume, tunnel number one hyperbolic manifold $M$ with a single cusp whose cusp shape is $\epsilon$-close to $[T]$. 
\end{theorem}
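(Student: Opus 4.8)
The strategy is to reduce to approximating a dense family of flat tori: it suffices to produce, for each target $[T]$ and each $\epsilon>0$, a one-cusped finite-volume hyperbolic manifold of tunnel number exactly one whose cusp shape lies within $\epsilon$ of $[T]$, provided the targets we can hit form a dense subset of Teichm\"uller space. To build such manifolds I would look for a family $\{M_k\}$ of hyperbolic link complements, where $M_k$ has a distinguished cusp $c_0$ and auxiliary cusps $c_1,\dots,c_{m_k}$, satisfying two properties. First, a topological one: for all sufficiently large twisting parameters $n_1,\dots,n_{m_k}$, the manifold obtained from $M_k$ by $1/n_i$ Dehn filling the cusp $c_i$ admits an explicit unknotting tunnel, i.e.\ a genus-two Heegaard splitting with the remaining cusp $c_0$ on the compression-body side. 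Second, a geometric one: the cusp shapes of $c_0$ in the manifolds $M_k$, allowed to vary within the small neighbourhoods produced by the auxiliary fillings, should be dense. A natural candidate for such a family comes from augmentation: start from a knot carrying a simple unknotting tunnel, insert several twist regions swept out by that tunnel, and augment each twist region with a crossing circle; the crossing circles are the auxiliary cusps $c_i$, and adding twists corresponds to $1/n_i$ filling.

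Granting such a family, I would fix $[T]$ and $\epsilon$, choose $M_k$ whose distinguished cusp shape (after a suitable small adjustment) lies within $\epsilon/2$ of $[T]$, and then $1/n_i$ Dehn fill every auxiliary cusp with all $n_i$ large. Thurston's hyperbolic Dehn surgery theorem guarantees that the filled manifold $M$ is hyperbolic with $c_0$ its only cusp, and the Neumann--Zagier asymptotics (equivalently, continuity of the cusp-shape function on Dehn surgery space) show that the cusp shape of $M$ converges to that of $c_0$ in $M_k$ as the $n_i\to\infty$, so for large $n_i$ it lies within $\epsilon$ of $[T]$. By the first property, $M$ carries a genus-two Heegaard splitting; since a one-cusped hyperbolic manifold has no Heegaard splitting of genus zero or one (a solid torus and $T^2\times I$ are not hyperbolic), $M$ has tunnel number exactly one, as required. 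Finally, one must check that the shapes obtained this way really are dense, which closes the reduction.

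I expect the genuine difficulty to be the density property and its coexistence with the topological constraint. Tunnel number one is a real restriction — a link complement with three or more cusps already has Heegaard genus at least three — so the construction can only work because the twisting makes the auxiliary tunnels redundant, and one must verify carefully that a single unknotting tunnel of the filled manifolds survives every large choice of $(n_1,\dots,n_{m_k})$; this is the price of using link complements with many cusps. More seriously, proving density requires controlling \emph{both} the real and the imaginary part of the shape of $c_0$: a construction with a single twist region merely drives the cusp into the thin part of Teichm\"uller space (the behaviour of the large-volume Berge knots mentioned in the introduction), so genuinely many, independently tunable twist regions are needed, together with an explicit enough geometric model for $M_k$ — an ideal polyhedral or triangulation decomposition in which the shape of $c_0$ can be read off, or a comparison with manifolds such as cyclic covers whose cusp shapes are already understood — in order to show the shapes accumulate everywhere.
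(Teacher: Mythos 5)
Your plan reduces the theorem to exactly its hard part and then leaves that part open. The scheme ``find link complements $M_k$ with a distinguished cusp $c_0$ and crossing-circle cusps, fill the crossing circles by $1/n_i$, invoke Thurston's Dehn surgery theorem, and conclude tunnel number one from the genus-two splitting'' is fine as far as it goes --- indeed the last step (no hyperbolic manifold has a splitting of genus $\leq 1$, so genus two forces tunnel number exactly one) is the same observation the paper uses to avoid Maher--Schleimer in the tunnel number one case, and the use of geometric convergence under high Dehn filling to control the limiting cusp shape is also how the paper finishes. But the theorem's content is the \emph{density} statement, and you give no mechanism for producing, for an arbitrary $[T]$, a manifold in your family whose distinguished cusp shape is close to $[T]$; you explicitly defer this (``one must check that the shapes obtained this way really are dense''). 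Augmented-link/twist-region families are not known to have this property, and the known explicit tunnel number one families behave in the opposite way (the paper's Proposition on Berge knots shows their cusp shapes accumulate on a single point). Nothing in your sketch controls both the real and imaginary parts of the shape of $c_0$, so the proposal as written does not prove the theorem. A secondary, also unverified, point is the claim that a single unknotting tunnel of the filled manifolds survives all large twist parameters; this is plausible for specific constructions but needs an argument. (Your motivational remark that three or more cusps force Heegaard genus at least three is also not correct in general: a genus-two splitting can have two torus boundary components on one side.)

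The paper supplies the missing mechanism from Kleinian group deformation theory rather than from link diagrams. One first writes down an explicit geometrically finite representation of $\pi_1$ of the $(1;n+1)$-compression body in which $\rho(\alpha),\rho(\beta)$ are parabolic translations by freely chosen complex numbers $a,b$; the Poincar\'e polyhedron theorem (isometric spheres plus a vertical fundamental domain) shows this is a hyperbolic structure, and since $a,b$ are arbitrary (subject to being large), the rank-two cusp shape can be prescribed to be \emph{exactly} $[T]$. This structure is pinched (non-minimally-parabolic), hence lies on $\partial\MP(C,T)$, and by the Canary--Culler--Hersonsky--Shalen density theorem it is an algebraic limit of maximally pinched structures, whose cusp shapes are therefore $\epsilon/2$-close to $[T]$. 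The convex core of such a structure has totally geodesic $3$-punctured sphere boundary, so it can be glued isometrically to the convex core of a maximally pinched handlebody (which exists by Thurston's uniformization), yielding a finite-volume manifold with one cusp of the desired shape and many auxiliary rank-two cusps along the Heegaard surface; Dehn filling those with large coefficients gives the one-cusped, genus-two-split manifold and preserves the cusp shape up to $\epsilon$. It is this ability to prescribe the cusp shape at the compression-body stage --- absent from your proposal --- that makes the density argument work.
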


A similar result holds for tunnel number $n$ manifolds.

\begin{theorem}\label{Thm:TunnelNCuspShapes}
  For any $\epsilon>0$, any point $[T]$ in the Teichm\"uller space of the torus, and any integer $n\geq 1$, there exists a complete, finite volume hyperbolic manifold $M$ that admits a tunnel system of $n$ tunnels and has a single cusp, with cusp shape $\epsilon$-close to $[T]$. 
 Further, assuming recent work of Maher and Schleimer~\cite{MaherSchleimer}, we can ensure $M$ is tunnel number $n$.
\end{theorem}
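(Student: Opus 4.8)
\textbf{Proof proposal for \refthm{TunnelNCuspShapes}.}
The plan is to bootstrap from \refthm{Tunnel1CuspShapes} by iterating its construction $n$ times. In the proof of \refthm{Tunnel1CuspShapes}, the manifold $M$ is built from a fixed hyperbolic ``template'' together with a controlled ``gadget'' region, and then a high Dehn filling is performed: the filling parameter simultaneously drives the shape of the surviving cusp toward the prescribed point $[T]$ of Teichm\"uller space and leaves behind a single visible tunnel. I would replace the single gadget by a chain of $n$ copies of it, concatenated along the natural subsurfaces of the construction so that the result remains a link-type complement rather than acquiring an essential torus. This produces a hyperbolic manifold $W_n$ with one distinguished ``output'' cusp and $n$ auxiliary cusps. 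Each copy of the gadget contributes exactly one tunnel, so after capping off one obtains a Heegaard surface of genus $n+1$, hence a tunnel system of $n$ tunnels; this already gives the upper bound $t(M)\le n$.

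Next I would control the geometry. Performing a sufficiently long Dehn filling on each of the $n$ auxiliary cusps of $W_n$, Thurston's hyperbolic Dehn surgery theorem guarantees that the filled manifold $M$ is hyperbolic with a single cusp, and that the shape of that cusp converges, as all $n$ filling slopes tend to infinity, to the shape of the output cusp of $W_n$. Exactly as in the case $n=1$, the construction retains enough freedom --- in the choice of gadget and attaching data, or of a single dominant filling slope --- that this limiting shape can be prescribed within $\epsilon/2$ of $[T]$; taking the slopes long enough then puts the actual cusp shape of $M$ within $\epsilon$ of $[T]$. Since the tunnel system survives the filling unchanged, $M$ admits a tunnel system of $n$ tunnels, which proves the first assertion of the theorem.

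Finally, for the sharper statement $t(M)=n$, I would invoke the work of Maher and Schleimer~\cite{MaherSchleimer}: their results allow one to arrange the stacked construction so that the genus-$(n+1)$ Heegaard splitting of $M$ realizes arbitrarily large distance in the relevant curve/compression-body complex. Combined with the standard uniqueness results for high-distance splittings, such a splitting cannot be destabilized or replaced by one of smaller genus, so every Heegaard splitting of $M$ has genus at least $n+1$, i.e.\ $t(M)=n$. This is a black-box step once $M$ and its explicit splitting are in hand.

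The main obstacle is the middle step. One must simultaneously (a) verify that the $n$-fold chain $W_n$ is hyperbolic with exactly $n+1$ cusps and carries the expected tunnel structure, which requires checking that concatenating the gadgets introduces no essential tori or annuli; and (b) show that the shape of the output cusp depends on the $n$ filling parameters in a controllable way and limits to the prescribed target --- this is where Thurston's Dehn surgery theorem together with Neumann--Zagier-type estimates must be applied uniformly across all $n$ fillings, rather than just one. By contrast, the tunnel-number lower bound reduces cleanly to \cite{MaherSchleimer} and the uniqueness of high-distance splittings.
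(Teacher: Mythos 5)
There is a genuine gap: your proposal defers the entire mathematical content to an unspecified ``gadget'' from \refthm{Tunnel1CuspShapes}, but in the paper that theorem has no independent construction --- it is deduced from \refthm{TunnelNCuspShapes} (the genus-two case plus the fact that no hyperbolic manifold has a genus-one splitting), so bootstrapping from it is circular relative to the actual logical structure, and in any case you never say what the gadget is or why chaining $n$ copies stays hyperbolic. More importantly, the step you wave at --- ``the construction retains enough freedom\dots that this limiting shape can be prescribed within $\epsilon/2$ of $[T]$'' --- is precisely the heart of the theorem, and it is not something Thurston/Neumann--Zagier estimates give you for free: one must first exhibit hyperbolic structures on the $(1;n{+}1)$-compression body with \emph{exactly} the prescribed cusp shape (\reflem{CuspedStructwithShape}, via explicit parabolic generators $\rho(\alpha),\rho(\beta)$ and isometric-sphere/Poincar\'e polyhedron arguments), then use the density of maximally pinched structures on $\partial\MP(C,T)$ (\refthm{CCHS}, Canary--Culler--Hersonsky--Shalen/McMullen) to get a maximally pinched structure with cusp shape $\epsilon/2$-close to $[T]$, and only then glue its convex core to the convex core of a maximally pinched handlebody along totally geodesic $3$-punctured spheres and Dehn fill the $3n+3$ resulting rank-two cusps. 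Without that compression-body step there is no mechanism in your argument forcing the surviving cusp shape to be dense in Teichm\"uller space rather than confined to whatever shapes your template happens to produce.

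Two further points. First, your claim that the tunnel system and the cusp-shape control ``survive'' the $n$ auxiliary fillings needs the specific choice of slopes made in the paper: the filled cusps meet the Heegaard surface $S$, and the slopes are taken of the form $\mu_i+k_i\lambda_i$ with $\lambda_i$ the surface slope, so that filling preserves the splitting along $S$ and acts on the disk set of the compression body by a multitwist along the pinched curves $P$. Second, your use of Maher--Schleimer as a black box for ``arbitrarily large distance'' skips the actual argument of \reflem{Handlebody}: one must show the set of handlebodies whose disk sets contain \emph{some multitwist image} of $D(C_0)$ has bounded diameter in the handlebody graph, then use infinite diameter of that graph to choose $H_0$ far from all of them, so that the Hempel distance exceeds $2n+2$ after \emph{any} of the fillings; only then does Scharlemann--Tomova give tunnel number exactly $n$. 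As written, your proposal correctly identifies where the difficulties lie, but it does not supply the ideas that resolve them.
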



\refthm{Tunnel1CuspShapes} also has interesting Dehn filling consequences.

\begin{named}{\refcor{TunnelNExceptions}}
For any integer $n\geq 1$, there exist infinitely many hyperbolic tunnel number~$n$ manifolds with at most one exceptional Dehn filling.
\end{named}

The Berge knots form a class of tunnel number one knots in $S^3$ that admit a lens space Dehn filling. Thus these knots have at least two exceptional Dehn fillings: the lens space and $S^3$. Baker showed that infinitely many come from Dehn filling a minimally twisted chain link. It follows from his work that the cusp shapes of these knots limit to a single Euclidean similarity structure; see \refprop{Berge}. In sharp contrast, \refthm{Tunnel1CuspShapes} and \refcor{TunnelNExceptions} imply that cusp shapes of more general tunnel number one manifolds have broader limits, and fewer exceptional Dehn fillings.
Indeed, \refcor{TunnelNExceptions} was already known in the case $n=1$: The 2-bridge knots form a class of tunnel number one manifolds. It is known that infinitely many hyperbolic 2-bridge knots admit at most one exceptional Dehn fillings, due to work of Futer, Kalfagianni and Purcell bounding the cusp areas of these knots~\cite{FKP:Farey}. However, that paper does not discuss density of cusp shapes, and does not give any information on knots with higher tunnel numbers.

There is less literature on cusp shapes or geometric limits of tunnel number $n$ manifolds for higher $n$. Examples of such manifolds with at least two exceptional fillings are known. Infinite families of one-cusped hyperbolic manifolds with tunnel number $2$ and at least two exceptional Dehn fillings have been constructed by Dunfield, Hoffman, and Licata~\cite{DunfieldHoffmanLicata}, and by Baker~\cite{Baker:Tunnel2}. 
Earlier, Teragaito found a family of knots in $S^3$ with a small Seifert fibred surgery~\cite{Teragaito}; these were generalised by Baker, Gordon, and Luecke to produce knots with tunnel number at least two and at least two exceptional Dehn fillings~\cite{BakerGordonLueckeIII}. More generally, for any $n$, Eudave-Mu{\~n}oz and Luecke have constructed hyperbolic knots in $S^3$ with two exceptional Dehn fillings and tunnel number bounded below by $n$~\cite{EudaveMunozLuecke}. 
By the 6-Theorem~\cite{Agol:Bounds, Lackenby:Word}, the cusp shapes of these manifolds must lie in a bounded region of the moduli space of the torus. However, the limiting shapes of cusps of these families of manifolds have not been analyzed.

To investigate results on Dehn filling further, for example to determine whether there are tunnel number $n$ manifolds with \emph{no} exceptional Dehn fillings, we need to know the cusp \emph{area} of our manifolds, i.e.\ the area of the boundary of a maximal horoball neighborhood of the cusp. In this paper we use an algebraic limit argument to give cusp shapes, bounding the similarity structure of the cusp torus, but not its scale. Unfortunately, our method of proof does not give any information on cusp area; a geometric limit argument would be required. Thus while the shortest slope of the torus must have length at least one, and therefore long narrow tori have at most one exceptional Dehn filling, we cannot determine whether fatter tori are large or small, with long or short slope lengths. The following are still unknown.

\begin{question}
  Are there families of tunnel number $n$ manifolds with no short slopes and no exceptional Dehn fillings? What are the cluster points of the areas of the maximal cusps of tunnel number $n$ manifolds?
\end{question}

To prove \refthm{Tunnel1CuspShapes}, we consider geometric structures on compression bodies, similar to work in~\cite{CooperLackenbyPurcell}, \cite{BurtonPurcell}, and~\cite{LackenbyPurcell:ComprBodies}. Given any framed Euclidean similarity structure on a torus, we first find hyperbolic structures on compression bodies with that cusp shape $[T]$, then show these lead to maximally pinched structures with cusp shape approaching $[T]$. By gluing maximally pinched compression bodies to maximally pinched handlebodies and Dehn filling, we obtain our main results.

\subsection{Acknowledgements}
The authors thank Ken Baker for helpful conversations, and the referee for  suggestions to improve the paper. 

\section{Hyperbolic structures on compression bodies}\label{Sec:ComprBodies}

Let $S$ be a closed, orientable (possibly disconnected) surface with genus at least $1$, and let $I$ denote the interval $[0,1]$.

\begin{definition}\label{Def:ComprBody}
  A \emph{compression body} $C$ is either a handlebody, or the result of attaching $1$-handles to $S\times I$ along $S\times\{1\}$ such that the result is connected. The \emph{negative boundary} is $S\times\{0\}$ and is denoted $\partial_{-}C$; if $C$ is a handlebody then $\bdy_-C=\emptyset$. The \emph{positive boundary} is $\partial C\setminus\partial_{-}C$ and is denoted $\partial_{+}C$.
\end{definition}

The compression bodies that we study here are those for which $\bdy_-C$ is a torus and $\bdy_+C$ is a genus $(n+1)$ surface, with $n\geq 1$ an integer. We call this the $(1;n+1)$-compression body and denote it by $C(1;n+1)$ or just $C$ when there is no confusion. The numbers $1$ and $n+1$ refer to the genera of the boundary components. Thus $C(1;n+1)$ is obtained by attaching $n$ 1-handles to $\mathbb{T}^{2}\times I$. See \reffig{CompressionBody}.

\begin{figure}
\centering
\import{figures/}{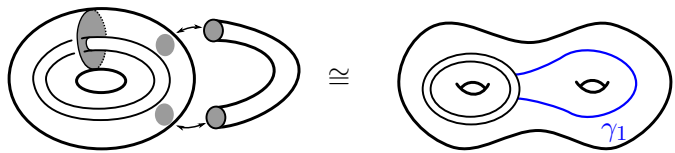}
\caption{The $(1;2)$-compression body}
\label{Fig:CompressionBody}
\end{figure}

The fundamental group of $C(1;2)$ is $\displaystyle \pi_{1}(C(1;2)) = (\Z\times\Z)*\Z$, because $C(1;2)$ deformation retracts to the union of a torus and an arc $\gamma_1$ at the core of the 1-handle; see \reffig{CompressionBody}. This arc is called the \emph{core tunnel} of $C(1;2)$. We will denote the generators of the $\Z\times\Z$ factor by $\alpha$ and $\beta$, and the generator of the free $\Z$ by $\gamma_1$.

More generally, for the $(1;n+1)$-compression body, the fundamental group is
\[
\pi_1(C(1;n+1)) = (\ZZ\times\ZZ)*\ZZ*\dots* \ZZ,
\]
with generators $\langle\alpha,\beta\rangle$ for the $\ZZ\times\ZZ$ factor, and generators $\gamma_1, \dots, \gamma_n$ for the free $\ZZ$ factors, where $\gamma_{1}, \ldots, \gamma_{n}$ can be taken to be freely homotopic to the core tunnels of the 1-handles, with endpoints connected by an arc on $T^2\times \{1\}$.

\subsection{Hyperbolic structures}
We will construct complete hyperbolic structures on compression bodies with desired geometric properties.

Let $\rho\from \pi_1(C)\to\PSL(2,\CC)$ be a discrete, faithful representation. Let $\Gamma = \rho(\pi_1(C))$ denote the image of such a representation. Note $\Gamma$ acts on $\HH^3$. The representation $\rho$ gives a \emph{hyperbolic structure} on $C$ if $\HH^3/\Gamma$ is homeomorphic to the interior of $C$.

Note $\Gamma =\rho(\pi_1(C))$ has a subgroup $\Gamma_\infty := \langle\rho(\alpha), \rho(\beta)\rangle \cong \ZZ\times\ZZ$, generated by parabolics fixing a point at infinity. By conjugating $\Gamma$ if necessary, we may assume $\Gamma_\infty$ fixes the point $\infty$ in $\HH^3$. Let $H$ be any horoball about infinity. Then $H/\Gamma_\infty$ is a \emph{rank-2 cusp} of $C$. Its boundary $\bdy H/\Gamma_\infty$ is a flat (Euclidean) torus.

We will study $\Gamma$ by studying its isometric spheres, as follows.

\begin{definition}\label{Def:IsoSphere}
  Let $g\in \PSL(2,\CC)$. Then $g$ can be represented by a $2\times 2$ matrix with determinant $1$ and coefficients in $\CC$, well defined up to multiplication by $\pm 1$:
  \[ g = \pm \begin{pmatrix}a&b\\c&d \end{pmatrix}.\]
  Provided $g$ does not fix the point $\infty$, the \emph{isometric sphere} of $g$, denoted by $I(g)$, is the Euclidean hemisphere centered at $g^{-1}(\infty) = -d/c$ with radius $1/|c|$.
\end{definition}

The following properties of isometric spheres are well known. For a reference, see for example~\cite[Section~2]{LackenbyPurcell:ComprBodies}.

\begin{lemma}\label{Lem:IsoSphereProps}
Let $g\in\PSL(2,\CC)$. 
\begin{enumerate}
\item If $H$ is any horosphere centered at $\infty$ in the upper half space $\UHS$, then $I(g)$ is the set of points in $\UHS$ equidistant from $H$ and $g^{-1}(H)$.
\item $I(g)$ is mapped isometrically to $I(g^{-1})$ by $g$; the half ball $B(g)$ bounded by $I(g)$ is mapped to the exterior of the half ball $B(g^{-1})$ bounded by $I(g^{-1})$.
\end{enumerate}
\end{lemma}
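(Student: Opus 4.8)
The plan is to work in the upper half-space model $\UHS = \{\, z + tj : z \in \CC,\ t > 0 \,\}$, on which $g$ acts by its Poincar\'e extension, and to reduce both parts to a single computation of how $g$ moves Euclidean heights. Writing $a,b,c,d$ for the entries of $g$ as in \refdef{IsoSphere} (so $c \neq 0$, since $g$ does not fix $\infty$), a standard computation with the quaternionic form of the Poincar\'e extension gives
\[
\operatorname{ht}\bigl(g(z+tj)\bigr) = \frac{t}{\,|cz+d|^2 + |c|^2 t^2\,}.
\]
For part~(1), a horosphere centered at $\infty$ is a horizontal plane $H = \{\, t = t_0 \,\}$, and the signed distance from $p = z + tj$ to $H$ is $\log(t/t_0)$; since $g$ is an isometry, the distance from $p$ to $g^{-1}(H)$ equals the distance from $g(p)$ to $H$, i.e.\ $\log\bigl(\operatorname{ht}(g(p))/t_0\bigr)$. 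Taking $t_0 > 1/|c|$ (harmless, since $I(g)$ will turn out not to depend on $t_0$) makes the horoballs bounded by $H$ and $g^{-1}(H)$ disjoint, so their equidistant locus is a single surface, on which $\operatorname{ht}(p) = \operatorname{ht}(g(p))$, that is, $|cz+d|^2 + |c|^2 t^2 = 1$. Expanding $|cz+d|^2$, dividing by $|c|^2$, and completing the square in $z$ rewrites this as $|z-(-d/c)|^2 + t^2 = 1/|c|^2$, which is exactly the isometric sphere $I(g)$; this also confirms independence of $t_0$ and proves~(1).

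For part~(2), I would apply the isometry $g$ to the characterization from~(1): $g(I(g))$ is the set of points equidistant from $g(H)$ and $g(g^{-1}(H)) = H$, i.e.\ the bisector of $H$ and $(g^{-1})^{-1}(H)$, which by~(1) applied to $g^{-1}$ is $I(g^{-1})$. Thus $g$ restricts to a bijection $I(g) \to I(g^{-1})$, automatically an isometry because $g$ is a global isometry of $\UHS$ (concretely $|g'(z)| = |cz+d|^{-2} = 1$ on the base circle $|cz+d| = 1$ of $I(g)$, which is the source of the name). Finally, $g$ carries the two components of $\UHS \setminus I(g)$ --- the bounded open half-ball $B(g)$ and its exterior --- onto the two components of $\UHS \setminus I(g^{-1})$; to decide which goes to which, note that $\infty$ lies in the exterior of $B(g)$ while $g(\infty) = a/c$ is the Euclidean center of $I(g^{-1})$, so points near $\infty$ map into $B(g^{-1})$. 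Hence the exterior of $B(g)$ maps onto $B(g^{-1})$, forcing $g(B(g))$ to be the exterior of $B(g^{-1})$, as claimed.

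I do not anticipate a genuine obstacle: both statements are classical (cf.\ \cite{LackenbyPurcell:ComprBodies}, cited above). The only steps needing a little care are deriving the height formula and, in~(1), discarding the spurious branch $\operatorname{ht}(p)\operatorname{ht}(g(p)) = t_0^2$ of the \emph{unsigned} equidistance condition; this branch is empty once $t_0 > 1/|c|$, which is the same threshold that makes the two horoballs disjoint, so the reduction in~(1) is legitimate.
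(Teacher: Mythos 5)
Your argument is correct in all essentials, but note that the paper does not actually prove this lemma: it quotes it as well known and points to~\cite{LackenbyPurcell:ComprBodies}, so what you have written supplies the computation the paper outsources. Your route is the standard one: the height formula $\operatorname{ht}(g(z+tj)) = t/(|cz+d|^2+|c|^2t^2)$ for the Poincar\'e extension, the identification of the equidistant locus with $\{|cz+d|^2+|c|^2t^2=1\}$, i.e.\ the hemisphere of radius $1/|c|$ about $-d/c$, and for part~(2) the observation that $g(I(g))$ is the bisector of $g(H)$ and $H$, which by part~(1) applied to $g^{-1}$ is $I(g^{-1})$, together with the component argument via $g(\infty)=a/c$ being the center of $I(g^{-1})$; all of this is sound, and part~(2) legitimately reuses part~(1) for $g^{-1}$ with the same $H$ since $g$ and $g^{-1}$ have lower-left entries of equal modulus. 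One caveat deserves flagging: the lemma says \emph{any} horosphere about $\infty$, whereas your proof establishes the unsigned-equidistance characterization only for $t_0>1/|c|$, equivalently when $H$ and $g^{-1}(H)$ bound disjoint horoballs. That restriction is not merely ``harmless'': when $t_0<1/|c|$ the branch $\operatorname{ht}(p)\operatorname{ht}(g(p))=t_0^2$ is a nonempty surface off $I(g)$, so the unsigned equidistant set is strictly larger than $I(g)$ and the literal ``any horosphere'' statement fails; the issue is whether the \emph{equidistant locus} depends on $t_0$ (it does, in the overlapping range), not whether $I(g)$ does, so your parenthetical justification points at the wrong quantity. Your closing remark --- that the spurious branch is empty exactly when $t_0>1/|c|$, which is exactly when the horoballs are disjoint --- is the correct repair, and that disjoint-horoball case (or, equivalently, the signed-distance reading, under which every $t_0$ works) is the version the paper actually uses and the one intended by the cited reference; if you keep the unsigned reading, state the lemma with that hypothesis.
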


\begin{definition}\label{Def:VertFundDomain}
  A \emph{vertical fundamental domain} for $\Gamma_{\infty}$ is a fundamental domain for the action of $\Gamma_{\infty}$ cut out by finitely many vertical geodesics planes in $\UHS$.
\end{definition}

Examples are in order. 

\begin{lemma}\label{Lem:CuspedStructure}
Let $\rho\from \pi_{1}(C)\to \PSL(2,\C)$ be defined on generators $\alpha$, $\beta$ and $\gamma_1$ of $\pi_{1}(C)$ by
\[\rho(\alpha) = \begin{pmatrix}1&a\\0&1\end{pmatrix}, \quad
  \rho(\beta) = \begin{pmatrix}1&b\\0&1\end{pmatrix}, \quad
    \rho(\gamma_1) = \begin{pmatrix}2&-1\\1&0\end{pmatrix},\]
where $a$ and $b$ are complex numbers which are linearly independent over $\R$ with $|a| > 4$ and $|b| > 4$. Then $\rho$ gives a hyperbolic structure on $C(1;2)$.
\end{lemma}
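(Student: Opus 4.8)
My plan is to show that $\rho$ is discrete and faithful with quotient homeomorphic to the interior of $C(1;2)$, by exhibiting an explicit fundamental domain for $\Gamma = \rho(\pi_1(C))$ built out of the cusp at infinity and the isometric spheres of the generator $\gamma_1$ and its inverse. Since $\rho(\alpha), \rho(\beta)$ are parabolics fixing $\infty$ and $a,b$ are $\R$-linearly independent, $\Gamma_\infty = \langle \rho(\alpha),\rho(\beta)\rangle \cong \Z\times\Z$ acts on $\UHS$ with a vertical fundamental domain $D_\infty$ cut out by four vertical planes, projecting to a fundamental parallelogram for the lattice $\langle a,b\rangle$ in $\C = \bdy\UHS \setminus\{\infty\}$. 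The matrix $\rho(\gamma_1) = \begin{pmatrix}2&-1\\1&0\end{pmatrix}$ has $c = 1$, so by \refdef{IsoSphere} its isometric sphere $I(\rho(\gamma_1))$ is the unit hemisphere centered at $\rho(\gamma_1)^{-1}(\infty) = 0$, and $I(\rho(\gamma_1^{-1}))$ is the unit hemisphere centered at $\rho(\gamma_1)(\infty) = 2$. Both have radius $1$, hence diameter $2$; since $|a|,|b| > 4$, these two hemispheres (and all their $\Gamma_\infty$-translates) are pairwise disjoint and are properly contained in $D_\infty$ together with translates landing well inside neighboring copies — in particular the two balls $B(\rho(\gamma_1))$ and $B(\rho(\gamma_1^{-1}))$ are disjoint and disjoint from their nontrivial $\Gamma_\infty$-translates.

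The candidate fundamental domain for $\Gamma$ is then
\[
\mathcal{F} \;=\; D_\infty \;\cap\; \bigl(\UHS \setminus (B(\rho(\gamma_1)) \cup B(\rho(\gamma_1^{-1})))\bigr),
\]
i.e.\ the part of the vertical chimney over the parallelogram that lies above both unit hemispheres. The face pairings are: the four vertical sides of $D_\infty$ are paired by $\rho(\alpha)^{\pm 1}, \rho(\beta)^{\pm 1}$, and the hemisphere $I(\rho(\gamma_1))$ is paired to $I(\rho(\gamma_1^{-1}))$ by $\rho(\gamma_1)$ using \reflem{IsoSphereProps}(2) (the half-ball $B(g)$ maps to the exterior of $B(g^{-1})$, so the exterior of $B(\rho(\gamma_1))$ meets $\mathcal F$ correctly). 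I would then invoke the Poincaré polyhedron theorem: one checks the face-pairing transformations generate $\Gamma$, that $\mathcal F$ is a fundamental domain, and reads off the group presentation. Here the only relations come from edge cycles; the vertical edges give the single relation $[\rho(\alpha),\rho(\beta)] = 1$, and the edges along the hemispheres — where the sphere $I(\rho(\gamma_1))$ meets a vertical face — must give trivial cycles because $\gamma_1$ is a free generator (no relation is imposed involving $\gamma_1$). Verifying the edge cycles close up with no extra relations, using that $a, b$ are large enough that $\rho(\gamma_1)(I(\rho(\gamma_1))) = I(\rho(\gamma_1^{-1}))$ sits in the interior of $\mathcal F$ away from the vertical walls, yields that $\rho$ is discrete and faithful with $\pi_1(\UHS/\Gamma) \cong (\Z\times\Z)*\Z = \pi_1(C(1;2))$.

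Finally I would identify the homeomorphism type of $\UHS/\Gamma$ with the interior of $C(1;2)$: the quotient deformation retracts onto the union of the cusp torus cross-section (from $D_\infty$) and an arc joining the two paired hemispheres, which is precisely a torus with a $1$-handle attached, i.e.\ $C(1;2)$; and the structure is complete because the only cusp is the rank-two cusp at $\infty$, which is complete as $\Gamma_\infty$ is a full $\Z^2$ of parabolics with fundamental domain $D_\infty$. The main obstacle I anticipate is the bookkeeping in the Poincaré polyhedron theorem: carefully checking that with $|a|, |b| > 4$ all translates of the two unit hemispheres stay disjoint and that every edge cycle is trivial (so no unexpected relations appear and the domain is genuinely fundamental). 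The constant $4$ — twice the hemisphere diameter $2$ — is exactly what makes this disjointness transparent, so the estimate should be routine once set up, but it is the step that requires genuine care. (One can alternatively phrase the disjointness/combinatorics using a Ford domain argument as in \cite{LackenbyPurcell:ComprBodies}, which is the cited reference for the isometric-sphere formalism.)
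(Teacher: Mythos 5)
Your proposal follows essentially the same route as the paper's proof: a vertical fundamental domain for $\Gamma_\infty$ together with the exteriors of the unit isometric spheres $I(\rho(\gamma_1))$ and $I(\rho(\gamma_1^{-1}))$ centered at $0$ and $2$, assembled via the Poincar\'e polyhedron theorem, with the quotient identified topologically as $T^2\times\RR$ with a $1$-handle attached. The only cosmetic difference is in phrasing the last step (the paper removes the two open half-spaces from $\HH^3/\rho(\langle\alpha,\beta\rangle)\cong T^2\times\RR$ and glues them to each other, which is exactly the $1$-handle attachment you describe), so your argument matches the paper's.
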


\begin{proof}
The isometric sphere $I(\rho(\gamma_1))$ has center $0$ and radius $1$. The isometric sphere $I(\rho(\gamma_1^{-1}))$ has center $2$ and radius $1$. A vertical fundamental domain for the action of $\Gamma_\infty = \langle\rho(\alpha),\rho(\beta)\rangle$ is cut out by vertical planes through lattice points generated by translation by $a$ and $b$ in $\CC$. Because $|a|>4$ and $|b|>4$, we may choose the planes so that a vertical fundamental domain contains both isometric spheres $I(\rho(\gamma_1))$, $I(\rho(\gamma_1^{-1}))$ in its interior.

We claim that the four planes of a vertical fundamental domain and the two isometric spheres $I(\rho(\gamma_1))$ and $I(\rho(\gamma_1^{-1}))$ form a fundamental region for $C(1;2)$. This can be seen by the Poincar{\'e} polyhedron theorem (e.g.~\cite[Theorem~2.25]{LackenbyPurcell:ComprBodies}): vertical planes are glued isometrically to vertical planes, $I(\rho(\gamma_1))$ is glued isometrically to $I(\rho(\gamma_1^{-1}))$. Thus the object obtained by these face pairings is a manifold with discrete fundamental group isomorphic to the face pairing group, isomorphic to $\langle \rho(\alpha), \rho(\beta), \rho(\gamma_1)\rangle \cong \pi_1(C)$.

Finally, to see that it gives a hyperbolic structure on $C(1;2)$, we need to show that the interior of $C(1;2)$ is homeomorphic to $\HH^3/\rho(\pi_1(C))$. This can be seen as follows. The quotient of $\HH^3$ by $\rho(\langle\alpha,\beta\rangle)$ is homeomorphic to $T^2\times\RR$. Remove the open half-spaces cut out by $I(\rho(\gamma_1))$ and $I(\rho(\gamma_1^{-1}))$ and glue these to each other. The result is homeomorphic to attaching a 1-handle. Thus the space is homeomorphic to the interior of $C$.
\end{proof}

\reffig{CuspedStructure} shows some isometric spheres of the hyperbolic structure of \reflem{CuspedStructure}.

\begin{figure}
\centering
\import{figures/}{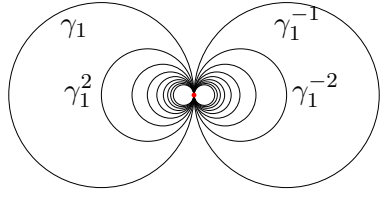}
\caption{Isometric spheres for some powers of $\rho(\gamma_1)$ and $\rho(\gamma_1^{-1})$, for $\Gamma$ as in \reflem{CuspedStructure}. Note we  have labeled the sphere $I(\rho(\gamma_1))$ simply by $\gamma_1$ above, and similarly for other group elements.}
\label{Fig:CuspedStructure}
\end{figure}

The following extends the example to $C(1;n+1)$.

\begin{lemma}\label{Lem:CuspedStructN}
Fix an integer $n\geq 1$. Define $\rho(\gamma_1)$ as in \reflem{CuspedStructure}. Let
\[ A = \begin{pmatrix}1& 5 \\ 0 & 1\end{pmatrix}. \]
Let $\rho(\gamma_2) = A\rho(\gamma_1)A^{-1}$, and inductively define $\rho(\gamma_{i+1}) = A\rho(\gamma_i)A^{-1}$.
Finally, let $a$ and $b$ be complex numbers that are linearly independent over $\RR$, with $|a|>5$ and $|b|>5$ chosen sufficiently large so that the set of isometric spheres $\{I(\rho(\gamma_j^{\pm 1}))\}$ is disjoint from the translates of that set under $\rho(\alpha)$ and $\rho(\beta)$. 
Then $\rho$ gives a complete hyperbolic structure on $C(1;n+1)$. 
\end{lemma}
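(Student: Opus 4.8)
The plan is to follow the proof of \reflem{CuspedStructure} essentially verbatim, exploiting the fact that the conjugating matrix $A$ is the parabolic translation $z\mapsto z+5$, so that conjugation by $A$ acts on isometric spheres by Euclidean translation. First I would record what the new isometric spheres are. Conjugating an element $g$ not fixing $\infty$ by a translation $T$ replaces $I(g)$ by $T(I(g))$: the lower-left matrix entry, and hence the radius $1/|c|$, is unchanged, while the center $g^{-1}(\infty)$ is translated by $T$. Since $\rho(\gamma_i)=A^{i-1}\rho(\gamma_1)A^{-(i-1)}$ and $A^{i-1}$ is translation by $5(i-1)$, it follows that $I(\rho(\gamma_i))$ is the hemisphere of radius $1$ centered at $5(i-1)$ and $I(\rho(\gamma_i^{-1}))$ is the hemisphere of radius $1$ centered at $5(i-1)+2$, for $i=1,\dots,n$.

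Next I would check the combinatorics of these $2n$ hemispheres. They all have radius $1$, with centers $0,2,5,7,\dots,5(n-1),5(n-1)+2$ on the real axis. Within the pair belonging to a fixed $\gamma_i$ the two hemispheres are tangent, exactly as in \reflem{CuspedStructure}, while any two centers coming from distinct indices are at distance at least $3$, so those hemispheres are disjoint. Thus the full family $\{I(\rho(\gamma_j^{\pm1}))\}$ is combinatorially just $n$ disjoint copies of the one-pair configuration of \reflem{CuspedStructure}. Taking $|a|,|b|>5$ and sufficiently large, as in the hypothesis, a vertical fundamental domain $F$ for $\Gamma_\infty=\langle\rho(\alpha),\rho(\beta)\rangle$ contains all $2n$ hemispheres in its interior and makes the $\Gamma_\infty$-translates of $\{I(\rho(\gamma_j^{\pm1}))\}$ disjoint from $\{I(\rho(\gamma_j^{\pm1}))\}$.

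Then I would apply the Poincar\'e polyhedron theorem to the polyhedron $P$ obtained by intersecting $F$ with the exteriors of the half-balls $B(\rho(\gamma_j^{\pm1}))$, $j=1,\dots,n$. The face pairings are the vertical sides of $F$ paired by $\rho(\alpha)$ and $\rho(\beta)$, together with each $I(\rho(\gamma_j))$ paired isometrically with $I(\rho(\gamma_j^{-1}))$ by $\rho(\gamma_j)$, using \reflem{IsoSphereProps}. Because the hemispheres lie in the interior of $F$ and are mutually disjoint apart from the tangencies within each $\gamma_j$-pair, the only ridge cycles are the one among the vertical faces (giving the torus relation) and the tangency points, handled just as in \reflem{CuspedStructure}. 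The theorem yields that $\Gamma=\langle\rho(\alpha),\rho(\beta),\rho(\gamma_1),\dots,\rho(\gamma_n)\rangle$ is discrete with fundamental domain $P$ and with face-pairing presentation exactly $(\ZZ\times\ZZ)*\ZZ*\cdots*\ZZ\cong\pi_1(C(1;n+1))$, so $\rho$ is discrete and faithful. Finally, $\HH^3/\rho(\langle\alpha,\beta\rangle)\cong T^2\times\RR$, and for each $j$ deleting the open half-spaces cut off by $I(\rho(\gamma_j))$ and $I(\rho(\gamma_j^{-1}))$ and gluing them by $\rho(\gamma_j)$ amounts to attaching a $1$-handle; doing this for all $j$ attaches $n$ one-handles to $T^2\times I$, producing a manifold homeomorphic to the interior of $C(1;n+1)$. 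Completeness is automatic, since $\HH^3$ is complete and $\Gamma$ acts on it properly discontinuously by isometries.

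The step I expect to be the main obstacle is purely organizational rather than conceptual: verifying that the isometric spheres of the distinct $\gamma_j$, and all of their $\Gamma_\infty$-translates, are genuinely pairwise disjoint. This is precisely why the conjugating translation is chosen to be by $5$ (so consecutive pairs of hemispheres are separated) and why $|a|,|b|$ must be taken larger than $5$ and, in fact, large enough to clear all $n$ pairs at once. Once this disjointness is established, the Poincar\'e polyhedron argument applies blockwise and is identical to the single-tunnel case.
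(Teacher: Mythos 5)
Your proposal is correct and follows essentially the same route as the paper: show that conjugation by the translation $A$ translates the isometric spheres by $5$ (the paper does this via the equidistance characterization of \reflem{IsoSphereProps}, you via the invariance of the lower-left matrix entry), check the $2n$ hemispheres and their $\Gamma_\infty$-translates are disjoint apart from the tangency within each $\gamma_j$-pair, and then apply the Poincar\'e polyhedron theorem plus the same one-handle-attachment argument to identify the quotient with the interior of $C(1;n+1)$. The only cosmetic difference is that the paper phrases the disjointness in terms of the whole configuration of spheres of powers $\rho(\gamma_1^k)$ and its $A$-translates, whereas you verify it directly for the generators' spheres, which is all the Poincar\'e argument needs.
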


\begin{proof}
Let $H$ be a horosphere centered at $\infty$ in $\UHS$. Note that using \reflem{IsoSphereProps}, 
\begin{align*}
  I(A\rho(\gamma_{1})A^{-1}) &= \{x\in\UHS: d(x,H) = d(x,A\rho(\gamma_{1}^{-1})A^{-1}H)\} \\
  &= \{x\in\UHS: d(x,H) = d(x,A\rho(\gamma_{1}^{-1})H)\}\\
  &= \{x\in\UHS: d(A^{-1}x,H) = d(A^{-1}x,\rho(\gamma_{1}^{-1})H)\}\\
  &= A\{y\in\UHS: d(y,H) = d(y,\rho(\gamma_{1}^{-1})H)\}\\
  &= AI(\rho(\gamma_{1})).
\end{align*}  
The above calculation shows that the isometric sphere of the element $A\rho(\gamma_{1})A^{-1}$ is obtained from that of $\rho(\gamma_{1})$ by horizontal translation by $5$. Similarly, for any integer $k$, the isometric spheres of $A\rho(\gamma_{1}^{k})A^{-1}$  are obtained from the corresponding spheres of $\rho(\gamma_{1}^{k})$ by horizontal translation by $5$. Thus, the configuration of spheres  $\rho(\gamma_{1}^{k})$ in \reffig{CuspedStructure} does not intersect with the $A$-translated configuration. Thus isometric spheres for $\rho(\gamma_i)$ do not overlap for any $i$.

As in the proof of the previous lemma, the Poincar{\'e} polyhedron theorem implies that these isometric spheres, as well as a vertical fundamental domain, cut out a fundamental region for the action of $\Gamma = \rho(\pi_1(C(1;n+1)))$. Thus this is a discrete, faithful representation. Moreover, again gluing of isometric spheres $I(\rho(\gamma_i))$ and $I(\rho(\gamma_i^{-1}))$ corresponds to attaching one-handles. Thus the manifold $\HH^3/\Gamma$ is homeomorphic to the interior of $C(1;n+1)$. 
\end{proof}

\section{Spaces of hyperbolic structures}\label{Sec:Spaces}

Define the \emph{representation space} of $C$ with \emph{parabolic locus} $T$ to be the space
\[\calR(C,T) = \{\mbox{representations }\rho\from\pi_{1}(C)\to\PSL(2,\C) \mid g\in\pi_1(T)\Rightarrow \rho(g)\mbox{ is parabolic} \}.\]
The space $\AH(C,T)$ of hyperbolic structures on $C$ is the set 
\[\AH(C,T) = \{\rho\in\calR(C,T) \mid \rho\mbox{ is discrete and faithful}\}/\sim\]
where the equivalence $\sim$ is given by $\rho\sim\rho'$ if they are conjugate representations. 

The topology on $\AH(C,T)$ is given by \emph{algebraic convergence}, as follows.

\begin{definition}\label{Def:AlgConvergence}
A sequence of representations $\{\rho_{n}\}$ in $\AH(C,T)$ converges \emph{algebraically} to a representation $\rho$ if the sequence $\{\rho_{n}(g)\}$ converges to $\rho(g)$ for every $g\in\pi_{1}(C)$. Here we regard $\rho_{n}(g)$ as a point in the manifold $\PSL(2,\C)$, and so $\{\rho_{n}(g)\}$ converges to $\rho(g)$ in the sense of convergence of points in $\PSL(2,\C)$.
\end{definition}

Suppose $\{\rho_{n}\}$ converges to $\rho$ algebraically. Algebraic convergence does not reveal much geometric information.
For example the sequence of quotient manifolds $\UHS/\rho_n(\pi_1(C))$ does not necessarily converge to the limiting quotient manifold $\UHS/\rho(\pi_1(C))$; see~\cite{Thurston-79, Brock-01, KerckhoffThurston}. A stronger notion is \emph{geometric convergence}.

\begin{definition}\label{Def:GeomConvergence}
A sequence of subgroups $\Gamma_{n}$ of $\PSL(2,\C)$ converges \emph{geometrically} to a subgroup $\Gamma$ if:
\begin{enumerate}
\item For each $\zeta\in\Gamma$, there exists $\zeta_{n}\in\Gamma_{n}$ such that $\{\zeta_{n}\}$ converges to $\zeta$.
\item If $\zeta_{n_{j}}\in \Gamma_{n_{j}}$, and $\{\zeta_{n_{j}}\}$ converges to $\zeta$, then $\zeta\in \Gamma$.
\end{enumerate}
\end{definition}

Geometric convergence of discrete groups implies convergence of their quotient spaces; see e.g.~\cite[Theorem~3.2.9]{CanaryEpsteinGreen:Notes}. 
Work of Jorgensen and Marden~\cite{Marden-Jorgensen} implies that if a sequence $\{\Gamma_n\}$ converges to a discrete group $\Gamma$ algebraically, then there is a subsequence converging geometrically to a group $\Gamma'$, with $\Gamma\leq\Gamma'$. Thus the quotient manifold $M=\UHS/\Gamma$ of the algebraic limit is a cover of the quotient manifold $M'=\UHS/\Gamma'$ of the geometric limit. 

\begin{definition}\label{Def:GeomFinite}
A hyperbolic structure is \emph{geometrically finite} if $\UHS/\Gamma$ admits a finite-sided, convex fundamental domain.
\end{definition}

For example, the hyperbolic structures of Lemmas~\ref{Lem:CuspedStructure} and~\ref{Lem:CuspedStructN} are geometrically finite. The fundamental domain consists of the isometric spheres described in the lemmas as well as a vertical fundamental domain. These are finite sided. 

\begin{definition}\label{Def:MinParabolic}
A hyperbolic structure on $C$ is \emph{minimally parabolic} if $\Gamma$ has no non-peripheral parabolic subgroups. Equivalently, in the case of $C(1;n+1)$, a hyerbolic structure is minimally parabolic if it has no rank-1 parabolic subgroup aside from subgroups of $\langle\rho(\alpha),\rho(\beta)\rangle$.  \end{definition}

The hyperbolic structures of Lemmas~\ref{Lem:CuspedStructure} and~\ref{Lem:CuspedStructN} are \emph{not} minimally parabolic. Each of the representatives $\rho(\gamma_i)$ has been chosen to be parabolic, but $\gamma_i$ is not an element of the free abelian rank-2 subgroup $\pi_1(T)$ of $\pi_1(C)$. Thus $\rho$ gives $C$ a structure which is geometrically finite but not minimally parabolic.

Denote the space of minimally parabolic, geometrically finite structures on $C$ by $\MP(C,T)$. 
The boundary of the space $\MP(C,T)$ consists of equivalence classes of representations which give $C$ structures that are geometrically finite but not minimally parabolic, or structures that are \emph{geometrically infinite} (not geometrically finite). The Density theorem (\cite{NamaziSouto}, \cite{Ohshika}) implies that every structure on the boundary of $\MP(C,T)$ is the algebraic limit of a convergent sequence in $\MP(C,T)$, and that the closure of $\MP(C,T)$ is $\AH(C,T)$.
We call structures that are geometrically finite but not minimally parabolic \emph{pinched structures}.

Note that \reflem{CuspedStructure} and \reflem{CuspedStructN} give examples of  pinched structures.

\begin{definition}\label{Def:MaxPinched}
A \emph{maximally pinched structure} is a pinched structure $\rho$ for which
the boundary of the convex core of $\HH^3/\rho(\pi_1(C))$ consists of 3-punctured spheres. 
\end{definition}

\begin{remark}
The structures of \refdef{MaxPinched} are typically called \emph{maximal cusps} in the literature on Kleinian groups, because the boundary of each 3-punctured sphere is a rank-1 cusp (e.g.~\cite{McMullen:MaxCusps, CCHS:MaxCusps}). Unfortunately, in the case of manifolds with torus boundary components, a \emph{maximal cusp} refers to something else entirely: a maximally embedded horoball neighborhood of all rank-2 cusps of the manifold. Because our maximally pinched manifolds will also have a maximal horoball neighborhood of the torus boundary component, we rename the type of structures. 
We decided on \emph{pinched} because such a structure pinches a maximal collection of curves on the higher genus boundary component into parabolics, with length $0$.
\end{remark}

Maximally pinched structures are useful because of the following result due to Canary, Culler, Hersonsky, and Shalen~\cite{CCHS:MaxCusps}, extending work of McMullen~\cite{McMullen:MaxCusps}. 

\begin{theorem}\label{Thm:CCHS}
Maximally pinched structures are dense on the boundary $\partial \MP(C,T)$ of the space $\MP(C,T)$.\qed
\end{theorem}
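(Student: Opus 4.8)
Since this is the density theorem for maximal cusps --- due to McMullen~\cite{McMullen:MaxCusps} for quasi-Fuchsian groups and extended to general geometrically finite Kleinian groups by Canary, Culler, Hersonsky, and Shalen~\cite{CCHS:MaxCusps} --- the plan is to invoke it directly, but I will sketch the shape of the argument as it specializes to $C=C(1;n+1)$. First I would identify the parameter space: by the Ahlfors--Bers measurable Riemann mapping theorem together with Marden's deformation theory, $\MP(C,T)$ is canonically homeomorphic to the Teichm\"uller space $\mathrm{Teich}(\partial_+C)$ of the genus $(n+1)$ surface $\partial_+C$, a hyperbolic structure corresponding to the conformal structure it induces on its boundary at infinity. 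Under this correspondence a maximally pinched structure is a point of $\partial\MP(C,T)$ at which a pants decomposition $P$ of $\partial_+C$ --- a maximal system of $3n$ disjoint, pairwise non-isotopic essential simple closed curves --- has been pinched to parabolics, so that the convex core boundary consists of the $2n$ thrice-punctured spheres obtained by cutting $\partial_+C$ along $P$; and for each such $P$, Thurston's pinching construction provides a path in $\overline{\MP(C,T)}$ from any interior point to the corresponding maximally pinched point $\rho_P$.

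The proof is then an induction on the number of curves in the parabolic locus. Given $\rho\in\partial\MP(C,T)$ and $\epsilon>0$, I would first reduce to the case that $\rho$ is geometrically finite but not minimally parabolic: a geometrically infinite $\rho$ is a limit of geometrically finite points of $\partial\MP(C,T)$, using topological tameness together with the Density Theorem (\cite{NamaziSouto, Ohshika}) cited above and continuity properties of the parametrization of $\partial\MP(C,T)$. For geometrically finite $\rho$ whose parabolic locus is a proper subsystem $Q$ of a pants decomposition, I would then locate a geometrically finite boundary point $\rho'$ within a prescribed distance of $\rho$ whose parabolic locus strictly contains $Q$, by approximating $\rho$ from inside $\MP(C,T)$ by structures in which a suitable essential curve disjoint from $Q$ has very short hyperbolic length on the conformal boundary and then pinching that curve. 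Iterating with a geometrically decreasing sequence of tolerances terminates, after at most $3n$ steps, at a maximally pinched structure lying within $\epsilon$ of the original $\rho$.

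The hard part will be the quantitative control of each pinching step, together with the bookkeeping that makes the induction terminate near the target. The key estimate, which is the technical heart of~\cite{McMullen:MaxCusps}, is that making a simple closed curve of small hyperbolic length $\delta$ on the conformal boundary into a parabolic is realized by a quasiconformal deformation of dilatation $1+O(\delta)$ --- obtained by comparing the length of the curve with the modulus of its standard collar --- so that the new boundary point stays close to the old one, and that the groups encountered along the deformation remain discrete and geometrically finite throughout. Establishing this, together with the reduction from geometrically infinite to geometrically finite boundary points and the verification that one can always exhibit the next short curve to pinch, for the deformation spaces of general geometrically finite manifolds such as $C(1;n+1)$, is exactly the work carried out in~\cite{CCHS:MaxCusps}.
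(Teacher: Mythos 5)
Your proposal takes essentially the same route as the paper: \refthm{CCHS} is stated there with no proof at all, being exactly the density-of-maximal-cusps theorem of Canary--Culler--Hersonsky--Shalen~\cite{CCHS:MaxCusps} extending McMullen~\cite{McMullen:MaxCusps}, which you likewise invoke directly. Your accompanying sketch of the quasiconformal pinching argument is a reasonable gloss (with the small caveat that on a compression body only non-compressible curves on $\partial_+C$ can be pinched, so the pinched system is a maximal pinchable collection rather than an arbitrary pants decomposition), but nothing beyond the citation is required to match the paper.
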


\begin{lemma}\label{Lem:CuspedStructwithShape}
Let $[T]$ be any point in the Teichm\"uller space of the torus. For any $n\geq 1$, there exists a pinched structure $\rho$ on $C(1;n+1)$ such that $\rho$ gives $C(1;n+1)$ cusp shape $[T]$.
\end{lemma}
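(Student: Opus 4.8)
The plan is to realize the shape $[T]$ directly with the explicit pinched structures of \reflem{CuspedStructN}, exploiting the freedom to rescale the rank-2 cusp. First I would identify the Teichm\"uller space of the torus with the upper half-plane $\HH^2$, so that $[T]$ becomes a modulus $\tau$ with $\operatorname{Im}\tau>0$, represented by the marked flat torus $\CC/(\ZZ\cdot 1+\ZZ\cdot\tau)$. Next I would record how to read off the cusp shape from a representation of the type in \reflem{CuspedStructN}: after conjugating so that $\Gamma_\infty=\langle\rho(\alpha),\rho(\beta)\rangle$ fixes $\infty$, with $\rho(\alpha),\rho(\beta)$ the upper-triangular parabolics having off-diagonal entries $a$ and $b$, the boundary of a maximal horoball neighborhood of the rank-2 cusp is the flat torus $\CC/(\ZZ a+\ZZ b)$, marked by the ordered basis $(a,b)$ coming from the generators $\alpha,\beta$ of $\pi_1(\bdy_-C)$. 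Dividing through by $a$ identifies this with $\CC/(\ZZ\cdot 1+\ZZ\cdot(b/a))$, so the cusp shape of $\rho$ is $[\,b/a\,]\in\HH^2$ (with the opposite marking convention handled by exchanging the roles of $\alpha$ and $\beta$). This reduces the lemma to producing a structure as in \reflem{CuspedStructN} with $b/a=\tau$.

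The main idea is that \reflem{CuspedStructN} leaves a scaling parameter free. I would take $a=\lambda$ and $b=\lambda\tau$ with $\lambda>0$ real; then $b/a=\tau$ for every $\lambda$, and $a,b$ are linearly independent over $\RR$ because $\tau\notin\RR$. The remaining hypotheses of \reflem{CuspedStructN} are that $|a|,|b|>5$, which holds once $\lambda>\max\{5,\,5/|\tau|\}$, and that the finite collection of isometric spheres $\{I(\rho(\gamma_j^{\pm 1}))\}$ be disjoint from its $\Gamma_\infty$-translates. Here I would use that this collection is fixed --- it depends only on $n$, not on $a,b$ --- and lies above a bounded region $\mathcal N\subset\CC$ (within a bounded neighborhood of the segment $[0,5n]\subset\RR$). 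Its $\Gamma_\infty$-translates move $\mathcal N$ by the vectors of the lattice $\ZZ a+\ZZ b=\lambda(\ZZ+\ZZ\tau)$, whose shortest nonzero vector has length $\lambda\,\ell(\tau)$, where $\ell(\tau)>0$ is the length of the shortest nonzero vector of $\ZZ+\ZZ\tau$. Once $\lambda\,\ell(\tau)>\operatorname{diam}(\mathcal N)$, the region $\mathcal N$ meets none of its nontrivial translates, hence the isometric spheres are disjoint from their translates. Thus any
\[ \lambda > \max\Bigl\{\,5,\ \tfrac{5}{|\tau|},\ \tfrac{\operatorname{diam}(\mathcal N)}{\ell(\tau)}\,\Bigr\} \]
works.

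For such $\lambda$, \reflem{CuspedStructN} hands us a complete hyperbolic structure $\rho$ on $C(1;n+1)$, and I would then verify it is a pinched structure: it is geometrically finite because the vertical fundamental domain together with the isometric spheres is a finite-sided convex fundamental domain, and it is not minimally parabolic because each $\rho(\gamma_i)$ is parabolic while $\gamma_i$ generates a free factor of $\pi_1(C(1;n+1))=(\ZZ\times\ZZ)*\ZZ*\dots*\ZZ$ and so is not conjugate into the peripheral subgroup $\langle\alpha,\beta\rangle$. By the computation of the first paragraph its cusp shape is $[\,b/a\,]=[\tau]=[T]$, which completes the argument.

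I do not anticipate a real obstacle; the only subtlety is whether the ``sufficiently large'' constraint built into \reflem{CuspedStructN} can coexist with prescribing an arbitrary cusp shape. It can, precisely because multiplying $a$ and $b$ by a common positive real is a similarity of the cusp torus: it preserves the cusp shape while letting us coarsen the translation lattice as much as needed. Everything else --- the fundamental domain, the homeomorphism type of $\HH^3/\Gamma$, and parabolicity of the $\rho(\gamma_i)$ --- is already established in \reflem{CuspedStructN}.
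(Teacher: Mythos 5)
Your proposal is correct and follows essentially the same route as the paper: both realize $[T]$ using the explicit representations of \reflem{CuspedStructN}, choosing $a$ and $b$ to have the prescribed ratio and large enough modulus to meet that lemma's hypotheses. The only difference is that you spell out explicitly (via the rescaling $a=\lambda$, $b=\lambda\tau$ and the shortest-lattice-vector estimate) why the shape condition and the ``sufficiently large'' condition are compatible, a point the paper leaves implicit.
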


\begin{proof}
The representation $\rho\from \pi_{1}(C)\to \PSL(2,\C)$ is defined as in \reflem{CuspedStructure} and in \reflem{CuspedStructN}, choosing complex numbers $a$ and $b$ to give the desired shape, as follows:
\begin{enumerate}
\item[(i)] Complex numbers $a$ and $b$ are chosen to be linearly independent over $\R$ with $|a| > 5$ and $|b| > 5$, and $|a|$ and $|b|$ large enough that the set of isometric spheres $\{I(\rho(\gamma_j^{\pm 1}))\}$ is disjoint from its translates under $\rho(\alpha)$ and $\rho(\beta)$. 
\item[(ii)] Also, $[\bdy H/\langle \rho(\alpha),\rho(\beta)\rangle] = [T]$ for $H$ a horoball about infinity. 
\end{enumerate}
Condition (i) ensures that $a$ and $b$ satisfy the hypothesis of \reflem{CuspedStructN}. Hence, $\rho$ gives a pinched structure. Condition (ii) ensures that the shape on the rank-2 cusp of $C$ equipped with this structure is $[T]$.
\end{proof}

Recall that the Teichm\"uller space of the torus is the space of similarity classes of (marked) flat metrics on the torus. It is isometric to the upper half plane $\HH^2 = \{z\in\CC\mid {\rm Im}(z)>0\}$, equipped with the hyperbolic metric; see for example~\cite{Lehto}. 

\begin{lemma}\label{Lem:maxCuspOnC}
Let $[T]$ be any point in the Teichm\"uller space of the torus. For any $\epsilon > 0$, there exists a maximally pinched structure $\rho'$ on $C(1;n+1)$ such that the cusp shape of $\rho'$ is $\epsilon/2$-close to $[T]$.
\end{lemma}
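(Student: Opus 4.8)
The plan is to combine \reflem{CuspedStructwithShape} with \refthm{CCHS} by a continuity/approximation argument. First I would take the pinched structure $\rho$ on $C(1;n+1)$ produced by \reflem{CuspedStructwithShape}, so that $\rho$ gives $C$ cusp shape exactly $[T]$. This $\rho$ is a geometrically finite, non-minimally-parabolic structure, hence by the discussion preceding \refthm{CCHS} it lies on the boundary $\bdy\MP(C,T)$ of the space of minimally parabolic geometrically finite structures. Now \refthm{CCHS} says maximally pinched structures are dense in $\bdy\MP(C,T)$, so there is a sequence of maximally pinched structures $\rho'_k \in \bdy\MP(C,T)$ converging algebraically to $\rho$.

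Next I would argue that algebraic convergence $\rho'_k \to \rho$ forces convergence of cusp shapes. Since every $\rho'_k$ and $\rho$ lie in $\calR(C,T)$, the restriction to $\pi_1(T) = \langle\alpha,\beta\rangle$ consists of parabolics; after conjugating so that all the groups $\langle\rho'_k(\alpha),\rho'_k(\beta)\rangle$ and $\langle\rho(\alpha),\rho(\beta)\rangle$ fix $\infty$, the elements $\rho'_k(\alpha), \rho'_k(\beta)$ are upper-triangular parabolics $\begin{pmatrix}1 & a_k\\ 0 & 1\end{pmatrix}$, $\begin{pmatrix}1 & b_k \\ 0 & 1\end{pmatrix}$, and algebraic convergence gives $a_k \to a$, $b_k \to b$ where $\rho(\alpha), \rho(\beta)$ have translation lengths $a,b$. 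The cusp shape of the rank-2 cusp is the point $b/a$ (or $a/b$) in $\HH^2 \cong$ Teichm\"uller space, which depends continuously on $(a,b)$ as long as $a \neq 0$; since $\rho$ is a genuine discrete faithful representation with $a,b$ linearly independent over $\R$, we have $a \neq 0$, so $[\bdy H/\langle\rho'_k(\alpha),\rho'_k(\beta)\rangle] \to [T]$ in the Teichm\"uller metric. Hence for $k$ large enough the cusp shape of $\rho'_k$ is within $\epsilon/2$ of $[T]$, and we set $\rho' = \rho'_k$.

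One subtlety to address carefully: algebraic convergence is only well-defined up to conjugacy, so I must fix representatives in each conjugacy class compatibly. The clean way is to normalize each $\rho'_k$ in its conjugacy class so that $\rho'_k(\alpha)$ and $\rho'_k(\beta)$ are upper triangular parabolics fixing $\infty$ and, say, fixing a normalization of one more generator or a trace condition to kill the remaining one-parameter ambiguity (Euclidean similarity of the fixed torus) — but since we only want the cusp \emph{shape} $[T]$, not the scale, the residual scaling ambiguity is harmless and we need only track the ratio $b_k/a_k$. I would phrase this so that \refdef{AlgConvergence} is applied to these normalized representatives.

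The main obstacle is making sure that the sequence supplied by \refthm{CCHS} actually has the limiting cusp shape we want, i.e.\ that we may approximate \emph{our specific} boundary point $\rho$ (with its prescribed cusp shape) rather than some other maximally pinched structure; this is exactly why we need $\rho \in \bdy\MP(C,T)$ together with the density statement of \refthm{CCHS}, and why the continuity of the cusp-shape function on the parabolic locus at the point $\rho$ is the crux. Everything else — the existence of $\rho$, the identification of $\bdy\MP(C,T)$, the density — is quoted from the results stated above.
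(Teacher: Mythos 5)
Your proposal follows essentially the same route as the paper: take the pinched structure $\rho$ with cusp shape $[T]$ from \reflem{CuspedStructwithShape}, note it lies on $\bdy\MP(C,T)$, apply the density of maximally pinched structures (\refthm{CCHS}) to get an algebraically convergent sequence, and conclude that closeness of $\rho'(\alpha),\rho'(\beta)$ to $\rho(\alpha),\rho(\beta)$ forces the cusp shape to be $\epsilon/2$-close to $[T]$. Your extra care about normalizing conjugacy representatives and the continuity of the shape $b_k/a_k$ simply spells out a step the paper leaves implicit, so the argument is correct and matches the paper's proof.
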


\begin{proof}
Let $\rho$ be a pinched structure on $C$ with cusp shape $[T]$ as in \reflem{CuspedStructwithShape}. By \refthm{CCHS}, maximally pinched structures are dense on the boundary $\partial \MP(C,T)$ of the space $\MP(C,T)$. Therefore, the fact that $\rho\in \partial MP(C)$ implies that there exists a sequence $\{\rho_{n}\}$ of maximally pinched structures on $C$ which converges algebraically to $\rho$. Thus, there exists a maximally pinched structure $\rho'$ that is $\epsilon/2$-close to $\rho$. In particular, $\rho'(\alpha)$ and $\rho'(\beta)$ are $\epsilon/2$-close to $\rho(\alpha)$ and $\rho(\beta)$ because the convergence is algebraic. As a result, $\rho'$ gives $C$ a cusp shape that is $\epsilon/2$-close to $[T]$.
\end{proof}

Lemma~\ref{Lem:maxCuspOnC} is only an existence result, and unfortunately gives no explicit structure or examples to work with. Finding examples of maximally pinched structures is non-trivial. To find such a structure on $C(1;2)$, a choice of three appropriate curves on $\bdy_+C(1;2)$ must be pinched, which amounts to finding values of $\rho(\alpha)$, $\rho(\beta)$ and $\rho(\gamma_1)$ for which certain products of generators have trace $\pm 2$, namely those products that represent the boundary curves. For example, one maximally pinched structure with cusp shape $[T]=[-1/4+i\sqrt{3}/4]$ is given by the representation
\[
\rho(\alpha) = \begin{pmatrix}1&4\\0&1\end{pmatrix}, \quad
\rho(\beta) = \begin{pmatrix} 1&-1+i\sqrt{3}\\0&1 \end{pmatrix}, \quad
\rho(\gamma_1) = \begin{pmatrix} 2&-1\\1&0 \end{pmatrix}.
\]
This maximally pinched structure has a fundamental region in $\HH^3$ lying over the parallelogram and hemispheres on $\CC$ shown on the left of \reffig{MaxPinched}; the group elements labeling the circles in that figure are short for isometric spheres $I(\rho(\gamma_1))$, $I(\rho(\gamma_1^{-1}))$, etc. To obtain a fundamental region for the convex core, cut out four additional hemispheres that are ``dual,'' enclosing each curvilinear triangle in the original fundamental region as shown on the right of \reffig{MaxPinched}.
\begin{figure}
  \import{figures/}{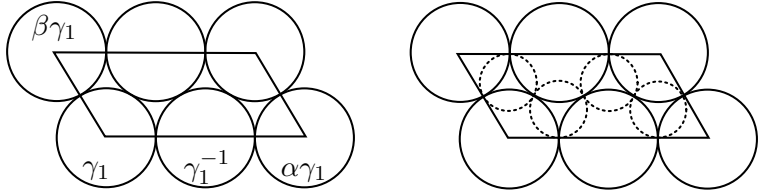}
  \caption{Left: A fundamental region for a maximally pinched structure on $C(1;2)$. Right: A fundamental region for its convex core.}
  \label{Fig:MaxPinched}
\end{figure}
Note that face identifications glue boundaries of those curvilinear triangles in such a way that the convex core has boundary consisting of two totally geodesic 3-punctured spheres. Note also that this example has particularly simple geometry. There are (countably infinitely many) more complicated structures, with varied cusp shapes. More discussion can be found in~\cite[Section~4.3]{Dang15}; see also~\cite{KeenSeries}.

\section{Pinched handlebodies}

We will glue the compression bodies of the previous section to carefully constructed handlebodies, obtaining manifolds with a Heegaard splitting of genus $n+1$. To ensure there is not a Heegaard splitting of lower genus, we will use techniques of~\cite{BurtonPurcell} to bound the Hempel distance of the splitting. In this section we review the necessary background, particularly Hempel distance, and state the relevant results on a handlebody.

Let $S$ be a closed, oriented, connected surface, and let $\calC(S)$ denote the curve complex of $S$. Thus $\calC(S)$ is the simplicial complex with vertices corresponding to isotopy classes of simple closed curves, with a collection of $k+1$ curves forming a $k$ simplex when the curves can be realized disjointly on $S$. Distance in $\calC(S)$ is the minimum number of edges in any path in the 1-skeleton.

\begin{definition}\label{Def:HepelDist}
Suppose a compression body or handlebody has outer boundary $S$. The \emph{disk set} in $\calC(S)$ consists of vertices that correspond to boundaries of essential disks in the compression body. Thus a Heegaard splitting of a 3-manifold along $S$ has two disk sets, one on either side. The \emph{Hempel distance} of the Heegaard splitting is the minimal distance in $\calC(S)$ between the disk sets; see Hempel~\cite{Hempel}. We denote the disk set of a compression body or handlebody $C$ by $D(C)$.
\end{definition}

The Hempel distance is relevant to us because it can be used to show a manifold has tunnel number $n$: Scharlemann and Tomova showed that if the Hempel distance of a genus $g$ Heegaard splitting is strictly greater than $2g$, then the manifold will have a unique Heegaard splitting of genus $g$ and no Heegaard splitting of smaller genus~\cite{ScharlemannTomova}. We wish to build manifolds with Heegaard splittings of high distance. 

Suppose $P$ is a disjoint collection of curves on $S$. Recall that a \emph{multitwist} on $P$ is a collection of Dehn twists, one for each curve of $P$. 

\begin{lemma}\label{Lem:Handlebody}
Let $C_0$ denote the maximally pinched structure on $C(1;n+1)$ of \reflem{maxCuspOnC}. Let $P$ denote the pinched curves on the outer boundary $S$; note these form a maximal simplex of $\calC(S)$. 

\begin{enumerate}
\item There exists a maximally pinched structure $H_0$ on a handlebody $H$ of genus $n+1$ such that the pinching is along curves of $P\subset S\cong \bdy H$.

\item More generally, assuming results of~\cite{MaherSchleimer}, there exists a maximally pinched structure $H_0$ on a handlebody $H$ of genus $n+1$ with pinched curves $P\subset S$ such that if $T$ is a multitwist on $P$, and $D_0$ is the disk set of $H_0$, then the distance between disk sets of $C_0$ and of $T(D_0)$ is at least $2n+3$.
\end{enumerate}
\end{lemma}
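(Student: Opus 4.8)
The plan is to obtain the pinched handlebody $H_0$ from the pinched compression body $C_0$ by Dehn filling, exploiting the fact that a genus $n+1$ handlebody is $C(1;n+1)$ with its torus cusp filled; the overall strategy of gluing a pinched handlebody to a pinched compression body along their totally geodesic thrice-punctured sphere boundaries and controlling the Hempel distance follows \cite{BurtonPurcell}.

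Part (1). Topologically, $C(1;n+1)$ is $\mathbb{T}^2\times I$ with $n$ one-handles attached along $\mathbb{T}^2\times\{1\}$, so attaching a solid torus to $\bdy_-C=\mathbb{T}^2\times\{0\}$ along any slope yields a solid torus together with $n$ one-handles, i.e.\ a genus $n+1$ handlebody $H$; this filling is supported away from $\bdy_+C$, so $S\cong\bdy H$ and the pants decomposition $P\subset S$ are unchanged. Geometrically, $C_0$ is geometrically finite with a single rank-$2$ cusp, finitely many rank-$1$ cusps (one for each curve of $P$), and convex core bounded by totally geodesic thrice-punctured spheres. By the hyperbolic Dehn filling theorem for geometrically finite hyperbolic $3$-manifolds, all sufficiently long fillings of the rank-$2$ cusp carry geometrically finite structures converging geometrically to $C_0$. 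A sufficiently small horoball neighbourhood of the rank-$2$ cusp is disjoint from the totally geodesic thrice-punctured spheres and from the rank-$1$ cusps along $P$, so these features are untouched by a long filling; hence the filled structure $H_0$ on $H$ has convex core bounded by exactly those thrice-punctured spheres, so it is a maximally pinched structure on $H$ with pinched curves $P$. (Alternatively one could construct $H_0$ by a Klein--Maskit combination, or deduce its existence from density of maximal cusps on $\partial\MP(H,\emptyset)$ in the spirit of \cite{McMullen:MaxCusps, CCHS:MaxCusps} once $P$ is known to be admissible for pinching in $H$; the filling route is convenient because the filled manifold literally carries the structure, and it simultaneously certifies admissibility of $P$.)

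Part (2). The handlebody produced above may have disk set close to $D(C_0)$ in $\calC(S)$, so we must choose the underlying handlebody more carefully while keeping $P$ admissible. We want a handlebody $H'$ with $\bdy H'\cong S$ such that: (a) $P$ is admissible for pinching in $H'$ — hence, by Part (1), $H'$ carries a maximally pinched structure $H_0$ pinching $P$; and (b) $d_{\calC(S)}(D(C_0),T(D(H')))\geq 2n+3$ for every multitwist $T$ supported on $P$. Applying $T^{-1}$ to a shortest path, condition (b) is equivalent to asking that $D(H')$ lie at distance at least $2n+3$ in $\calC(S)$ from the entire orbit of $D(C_0)$ under the group of multitwists on $P$. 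This is where we invoke the recent work of Maher and Schleimer \cite{MaherSchleimer}: their analysis of disk sets of handlebodies (via random walks on $\mathrm{MCG}(S)$, resp.\ the compression body graph) produces handlebodies whose disk sets lie arbitrarily far in $\calC(S)$ from a prescribed quasiconvex subset and, crucially, do so robustly with respect to twisting along a fixed multicurve; a generic such handlebody $H'$ also has no curve of $P$ in its disk set and, more generally, keeps $P$ admissible. Equipping $H'$ with a maximally pinched structure pinching $P$ as in Part (1) then gives the lemma, with bound $2n+3$ — strictly larger than $2(n+1)$, which is what the Scharlemann--Tomova criterion \cite{ScharlemannTomova} requires downstream.

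The hard part is condition (b) of Part (2). Twisting along the pants decomposition $P$ can move curves arbitrarily far in $\calC(S)$ — for instance, twisting a curve that fills $S$ together with a curve of $P$ produces a sequence escaping to infinity — so the orbit of $D(C_0)$ under $P$-multitwists is itself unbounded in $\calC(S)$, and the desired bound cannot be reduced to separate statements about $D(C_0)$ and $P$; obtaining a uniform lower bound over \emph{all} multitwists on $P$ is exactly the quantitative input that \cite{MaherSchleimer} supplies. The remaining ingredients — recognising the filling of $C(1;n+1)$ as a handlebody, checking that totally geodesic thrice-punctured spheres and the rank-$1$ cusps survive long fillings, and verifying that the generic handlebody $H'$ keeps $P$ admissible (no curve of $P$ bounds a disk, and no two cobound an essential annulus in $H'$) — are routine but should be written out and matched carefully against the genericity statement extracted from \cite{MaherSchleimer}.
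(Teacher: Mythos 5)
Your Part (1) takes a different route from the paper's: the paper simply takes the handlebody of a genus $n{+}1$ Heegaard splitting of any one-cusped $3$-manifold, marks $P$, and gets the maximally pinched structure from Thurston's Uniformization Theorem for pared manifolds (citing Morgan~\cite{Morgan}); you instead Dehn fill the rank-$2$ cusp of $C_0$. That route is plausible, but as written it silently relies on a geometrically finite Dehn filling theorem (not cited anywhere in the paper) and on the persistence of the totally geodesic $3$-punctured sphere boundary under long fillings, both of which would need references or an argument (e.g.\ doubling the convex core along its geodesic boundary and applying Thurston's Dehn surgery plus Mostow rigidity). This is a real but repairable difference.

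The genuine gap is in Part (2). You offload the entire quantitative content onto \cite{MaherSchleimer}, attributing to it a statement --- handlebodies whose disk sets are far from a prescribed quasiconvex set, ``robustly with respect to twisting along a fixed multicurve,'' and which ``generically'' keep $P$ admissible --- that is neither what that paper provides nor proved by you. The only input the argument needs from Maher--Schleimer is that the handlebody graph $\calH(S)$ has infinite diameter. The uniformity over all multitwists, which you explicitly assert cannot be reduced to separate statements about $D(C_0)$ and $P$, is exactly obtained by such a reduction: a multitwist $T$ on $P$ is an isometry of $\calC(S)$ fixing the simplex $P$ pointwise, so the minimal distance $d(T(D(C_0)),P)=d(D(C_0),P)=:R_0$ is independent of $T$. (Your ``unboundedness of the orbit'' observation conflates the diameter of the orbit with its minimal distance to $P$; only the latter matters.) Hence the set $X_0$ of handlebodies whose disk sets contain $T(D(C_0))$ for some multitwist $T$ has diameter at most $2R_0$ measured by minimal distance in $\calC(S)$, and therefore bounded diameter in $\calH(S)$, since handlebody-graph distance exceeds minimal curve-complex distance between disk sets by at most one~\cite[Lemma~4.11]{BurtonPurcell}. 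Infinite diameter of $\calH(S)$ then yields a handlebody $Y$ at $\calH(S)$-distance at least $2n+3$ from all of $X_0$, which forces $d(D(C_0),T(D(Y)))$ to be large for every multitwist $T$; finally the maximally pinched structure on $Y$ pinching $P$ again comes from Thurston's Uniformization, not from any genericity statement. Without this bounded-orbit/handlebody-graph mechanism (or a precise, verifiable statement from \cite{MaherSchleimer} doing the same work), your proof of Part (2) does not go through.
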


\begin{proof}
The first part of the lemma is straightforward. For example, take any homeomorphism of $C$ to the compression body in a Heegaard splitting of any one-cusped 3-manifold of genus $n+1$, and mark the curves $P$ on $S$. Let $H$ be the handlebody of the splitting, also with curves $P$ marked on $S$. Then the fact that there is a maximally pinched structure on $H$ with curves of $P$ pinched to parabolics follows from Thurston's Uniformization theorem (see Morgan~\cite{Morgan}). 

The second part follows as in the proof of Theorem~4.12 of~\cite{BurtonPurcell}. Let $\calH(S)$ denote the handlebody graph, consisting of a vertex for each handlebody with boundary $S$ and an edge between handlebodies whose disk sets intersect in the curve complex $\calC(S)$. The distance between handlebodies in $\calH(S)$ is defined to be the minimal number of edges in a path between them in $\calH(S)$. Abusing notation slightly, we let $D\from \calH(S)\to\calC(S)$ be the relation taking a handlebody $V \in \calH(S)$ to the disk set $D(V)$ of $V$. Let $h$ be the relation taking a disk set $D(C)$ of a compression body $C$ with outer boundary $S$ to the handlebody complex $\calH(S)$, such that $h$ maps $D(C)$ to the set $h(D(C))\subset \calH(S)$ consisting of all $V\in\calH(S)$ for which $D(C)$ is a subset of $D(V)$. In particular, $D(h(D(C_0)))$ is a subset of $\calC(S)$. Let $X_0$ be the set
\[ X_0 = \{ h(T(D(C_0))) \in \calH(S) \mid T\mbox{ is a multitwist on } P\}, \]
so $X_0$ is the subset of $\calH(S)$ consisting of all handlebodies whose disk sets contain $T(D(C_0))$ for some multitwist $T$.

Now, a Dehn twist along a curve in $P$ is an isometry of $\calC(S)$ fixing $P$ pointwise. Thus if $T$ is any multitwist on $P$, the distance from $D(C_0)$ to $P$ is equal to the distance from $T(D(C_0))$ to $P$. Call this distance $R_0$. 

Then $X_0$ has bounded diameter in $\calH(S)$, for if $V,W\in X_0$, then there are multitwists $T_1$ and $T_2$ such that $T_1(D(C_0))\subset D(V)$ and $T_2(D(C_0))\subset D(W)$. Then
\begin{align*}
  d(D(V),D(W)) \leq & d(D(V),P)+d(P, D(W)) \\
  \leq & d(T_1(D(C_0)),P) + d(P,T_2(D(C_0))) =2R_0, 
\end{align*}
where $d(\cdot,\cdot)$ denotes minimal distance in $\calC(S)$. 
The distance in the handlebody graph is bounded by one more than the distance in $\calC(S)$; see~\cite[Lemma~4.11]{BurtonPurcell}. Hence $X_0$ has bounded diameter in $\calH(S)$.

By work of Maher and Schleimer, $\calH(S)$ has infinite diameter~\cite{MaherSchleimer}. Thus we may choose a handlebody $Y$ in $\calH(S)$ such that for all $V\in X_0$, the distance in the handlebody graph between $Y$ and $V$ is at least $2n+3$.

Now, take a maximally pinched structure on $Y$ with curves $P$ on $\bdy Y$ pinched; again such a structure exists by Thurston's Uniformization Theorem~\cite{Morgan}. 
\end{proof}

Maximally pinched structures on handlebodies have been studied in depth. For example, Keen and Series examined maximally pinched structures on genus~2 handlebodies, and showed they are obtained by solutions to polynomials, one for each curve on the 4-punctured sphere, each giving unique geometry~\cite{KeenSeries}.

More generally, the limit sets of maximally pinched structures are known to be circle packings, given by limit sets of 3-punctured spheres that form the boundary at infinity of such a manifold~\cite{KeenMaskitSeries}. When we take the convex core of such a structure, we cut out the half-space bounded by each circle in the circle packing, obtaining in the quotient a finite volume space whose boundary consists of embedded, totally geodesic 3-punctured spheres. 

The interiors of these convex cores can have a wide range of geometry. As one example, the universal cover of a maximally pinched structure on a genus~$2$ handlebody is shown in~\cite[page~200]{MSW:IndrasPearls}. There are four red circles shown the picture, and a fundamental domain for the manifold is obtained by cutting out hyperplanes bounded by these circles; the manifold is formed by gluing the boundaries of those hyperplanes in pairs. See also Figure~7.7 in~\cite[page~205]{MSW:IndrasPearls}. The convex core of the manifold is then obtained by cutting out hyperplanes bounded by ``dual circles,'' enclosing the black curvilinear triangles, and an additional circle enclosing all three black triangles. Note that a fundamental domain for the convex core is therefore a regular ideal octahedron.

\section{Cusp shapes of tunnel number $n$ manifolds}

In this section, we prove the main theorems.

\begin{proof}[Proof of \refthm{TunnelNCuspShapes}]
Let $C_0$ be the compression body with the maximally pinched structure of \reflem{maxCuspOnC}. Let $H_0$ be a maximally pinched structure on a genus $n+1$ handlebody from \reflem{Handlebody}. The convex cores of $C_0$ and $H_0$ have boundary consisting of embedded, totally geodesic 3-punctured spheres. There is a unique hyperbolic structure on such a 3-punctured sphere (see~\cite{Adams-85}), hence the convex cores of $C_0$ and $H_0$ can be glued along their 3-punctured sphere boundaries via isometry.

Since both $C_0$ and $H_0$ are geometrically finite, their convex cores have finite volume and the resulting manifold $\widehat{M}$ is a complete, finite volume hyperbolic manifold. The manifold $\widehat{M}$ has a system of $n$ tunnels, as it is the result of gluing a $(1;n+1)$-compression body and a genus $n+1$ handlebody along their genus $n+1$ boundaries; let $S$ denote this surface of genus $n+1$. Furthermore, $\widehat{M}$ has $(3n+4)$ rank-2 cusps, $(3n+3)$ of which come from gluing the rank-1 cusps on the boundaries of the convex cores of $C_0$ and $H_0$. Denote these cusps by $T_1$, $T_2$, $\dots$, $T_{3n+3}$. The remaining cusp is the rank-2 cusp of $C_0$ whose shape is $[T_{\epsilon/2}]$, which is $\epsilon/2$-close to $[T]$ (from \reflem{maxCuspOnC}).

For each $i=1, \dots 3n+3$, the cusp $T_i$ meets the Heegard surface $S$ of genus $n+1$. Choose a homology basis $(\mu_i,\lambda_i)$ for $\bdy T_i$ such that $\lambda_i$ is parallel to the slope of $S$ on $T_i$, and $\mu_i$ is any curve with intersection number $1$ with $\lambda_i$. Let $s_i$ denote the slope $s_i = \mu_i + k_i\lambda_i$, where $k_i$ is a positive integer. Let $k=(k_1, \dots, k_{3n+3})$. Let $M_k$ denote the manifold obtained by Dehn filling $\widehat{M}$ along the slopes $s_i$ on cusps $T_i$. Then $M_k$ has Heegaard surface $S$. On one side of $S$ is a genus $(n+1)$ handlebody, and on the other side is a $(1;n+1)$-compression body. Thus $M_k$ is manifold with a system of $n$ tunnels. Moreover, $M_k$ has only one rank-$2$ cusp by construction.

Further, note that Dehn filling along slopes $s_i$ fixes the disk set of $H_0$, but modifies the disk set of $C_0$ by applying a multitwist along $P$. Assuming the second part of \reflem{Handlebody}, the Hempel distance of the resulting Heegaard splitting of $M_k$ will be larger than $2n+2$. Hence by work of Scharlemann and Tomova~\cite{ScharlemannTomova}, this is the minimal genus Heegaard splitting of $M_k$. (If we do not assume the second part of \reflem{Handlebody}, we only conclude that $M_k$ has a system of $n$ tunnels.)

Let $k_i\to\infty$ for each $i$. Then $k\to \infty$ and by the hyperbolic Dehn surgery theorem, the manifolds $M_k$ form a sequence of complete, finite volume, hyperbolic manifolds that converges geometrically to $\widehat{M}$. It follows that the rank $2$ cusps of $M_k$ converge to the rank $2$ cusp of $\widehat{M}$, which has shape $[T_{\epsilon/2}]$. Therefore, by choosing large enough $k$, the manifold $M=M_k$ has a system of $n$ tunnels and cusp shape $\epsilon$-close to $[T]$. Assuming the second part of \reflem{Handlebody}, this manifold is tunnel number $n$. 
\end{proof}

We may actually complete the proof of \refthm{Tunnel1CuspShapes} without using the work of Maher and Schleimer~\cite{MaherSchleimer} used in \reflem{Handlebody}.

\begin{proof}[Proof of \refthm{Tunnel1CuspShapes}]
Let $n=2$. For any $\epsilon>0$, and any $[T]$, by the proof of \refthm{TunnelNCuspShapes} we obtain a hyperbolic 3-manifold $M$ with a single tunnel and cusp shape $\epsilon$-close to $[T]$. Because there is a single tunnel, $M$ has a genus $2$ Heegaard splitting. Since there are no hyperbolic manifolds with smaller genus Heegaard splittings, $M$ must have tunnel number one. 
\end{proof}

\begin{corollary}\label{Cor:TunnelNExceptions}
For any integer $n\geq 1$, there exist infinitely many hyperbolic tunnel number~$n$ manifolds with at most one exceptional Dehn filling.
\end{corollary}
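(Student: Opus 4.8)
The plan is to deduce \refcor{TunnelNExceptions} from \refthm{Tunnel1CuspShapes} (for $n=1$) and \refthm{TunnelNCuspShapes} (for $n\geq2$, so that the ``tunnel number $n$'' conclusion rests on~\cite{MaherSchleimer}), together with the 6-Theorem~\cite{Agol:Bounds, Lackenby:Word}. The idea is that a hyperbolic manifold whose cusp torus is very long and thin has at most one slope of length $\leq6$ on its maximal cusp, hence at most one exceptional filling. Concretely, fix $n\geq1$, set $\epsilon=1$, pick a rapidly growing sequence $t_1<t_2<\cdots$, say $t_j=10^j$, and let $[T_j]$ be the point of Teichm\"uller space represented by $it_j$, i.e.\ a rectangular torus of modulus $t_j$. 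Applying \refthm{Tunnel1CuspShapes} or \refthm{TunnelNCuspShapes} to $[T_j]$ and $\epsilon=1$ produces a complete, finite volume, hyperbolic, tunnel number $n$ manifold $M_j$ with a single cusp whose shape is $1$-close to $[T_j]$.

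Next I would bound the lengths of slopes on the maximal cusp of $M_j$. Let $\mathcal{T}_j$ be the boundary of the maximal embedded horoball neighborhood of the cusp of $M_j$, and let $\ell_j$ be the length of its shortest closed geodesic; recall (as noted in the introduction) that $\ell_j\geq1$. Represent the similarity class of $\mathcal{T}_j$ by $\sigma_j\in\HH^2$ chosen within Teichm\"uller distance $1$ of $it_j$. Since $z\mapsto\log\operatorname{Im}(z)$ is $1$-Lipschitz on $\HH^2$ for the hyperbolic metric, $\operatorname{Im}(\sigma_j)\geq t_j/e$, which exceeds $6$ once $t_j>6e$. Because $\operatorname{Im}(\sigma_j)$ is large, the shortest vector of the lattice $\Z+\Z\sigma_j$ is $\pm1$, so $\mathcal{T}_j$ is isometric to $\C/(\ell_j(\Z+\Z\sigma_j))$. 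A geodesic representing a primitive slope $\ell_j(p+q\sigma_j)$ with $q\neq0$ then has length $\ell_j|p+q\sigma_j|\geq\ell_j|q|\operatorname{Im}(\sigma_j)\geq\operatorname{Im}(\sigma_j)>6$, while the only primitive slope with $q=0$ is $\pm1$, of length $\ell_j$. Hence at most one slope on $\mathcal{T}_j$ has length $\leq6$, so by the 6-Theorem every Dehn filling of $M_j$ along a slope of length greater than $6$ is hyperbolic, and therefore $M_j$ has at most one exceptional Dehn filling.

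For infinitude, observe that the modulus of $\mathcal{T}_j$ (the imaginary part of the standard fundamental-domain representative of its similarity class) equals $\max_{\gamma\in\operatorname{SL}(2,\Z)}\operatorname{Im}(\gamma\sigma_j)$, and the same $1$-Lipschitz estimate confines this to $[t_j/e,\,t_je]$, using that the $\operatorname{SL}(2,\Z)$-orbit of $it_j$ attains maximal imaginary part $t_j$. With $t_j=10^j$ these intervals are pairwise disjoint, so the tori $\mathcal{T}_j$ have pairwise distinct moduli; since the modulus of the maximal cusp is a homeomorphism invariant of $M_j$, the $M_j$ are pairwise non-homeomorphic. Discarding the finitely many $j$ with $t_j\leq6e$ leaves infinitely many hyperbolic tunnel number $n$ manifolds with at most one exceptional Dehn filling. (Alternatively one could fix a single thin $[T]$ and take infinitely many distinct members of the geometrically convergent sequence $M_k$ constructed in the proof of \refthm{TunnelNCuspShapes}, distinguished by $\vol(M_k)\nearrow\vol(\widehat{M})$.)

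The step requiring the most care is the second paragraph, and the reason is exactly the limitation emphasised above: Teichm\"uller space records only the similarity class of the cusp torus and not the scale of the maximal cusp, so our construction gives no control on cusp area and only the a priori bound $\ell_j\geq1$. The point is that \emph{thinness is a property of the shape alone}, hence transfers verbatim to the maximal cusp torus $\mathcal{T}_j$, and thinness together with $\ell_j\geq1$ already forces every slope but one to have length greater than $6$. The only genuine computation is the inequality $\operatorname{Im}(\sigma_j)\geq t_j/e$ together with its companion $\max_\gamma\operatorname{Im}(\gamma\sigma_j)\leq t_je$; both follow from $\log\operatorname{Im}$ being $1$-Lipschitz on $\HH^2$ and from the $\operatorname{SL}(2,\Z)$-orbit of $it_j$ having largest imaginary part exactly $t_j$.
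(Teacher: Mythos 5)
Your proposal is correct and follows essentially the same route as the paper: take long, thin rectangular shapes $[T]$, invoke Theorems~\ref{Thm:Tunnel1CuspShapes}/\ref{Thm:TunnelNCuspShapes}, use that the shortest slope on a maximal cusp has length at least $1$ so all other slopes are longer than $6$, and conclude via the 6-Theorem. The only differences are presentational: you make explicit the Lipschitz estimate translating $\epsilon$-closeness in Teichm\"uller space into a bound on the modulus, and you spell out the infinitude argument (pairwise distinct cusp moduli), both of which the paper leaves implicit.
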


\begin{proof}
For any $R>7$ and $0<\epsilon<1/2$, let $[T]$ be a flat torus that is rectangular in shape, normalized to have one side of length $1$ and the perpendicular side of length $R$. Let $M_R$ be a tunnel number one manifold whose cusp shape is within $\epsilon$ of $[T]$, provided by \refthm{TunnelNCuspShapes}. In a maximal cusp neighborhood, the shortest curve on the cusp has length at least $1$. Thus the cusp embeds as a Euclidean torus with one slope length at least $1$, and all other slope lengths at least $R-\epsilon > 7-\epsilon>6$. By the 6-Theorem~\cite{Agol:Bounds, Lackenby:Word}, there can be at most one exceptional Dehn filling of $M_R$.
\end{proof}

\section{Exceptional fillings}\label{Sec:Exceptional}

Recall that Berge knots have tunnel number one. \refcor{TunnelNExceptions}
contrasts with the following proposition concerning Berge knots, which follows from work of Baker~\cite{Baker:LargeVolBerge}.

\begin{proposition}\label{Prop:Berge}
  Let $[T_0]$ denote the Euclidean similarity structure on the torus that includes the $1\times 2$ rectangle. Let $U$ denote a neighborhood of $[T_0]$. Then there exists a sequence of hyperbolic Berge knots, all with cusp shape lying in the neighborhood $U$ of $[T_0]$.
  Moreover, maximal cusps for these Berge knots limit to a $2\times 4$ rectangle. 
\end{proposition}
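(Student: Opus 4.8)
The plan is to combine Baker's surgery description of Berge knots from \cite{Baker:LargeVolBerge} with the hyperbolic Dehn surgery theorem. First I would recall Baker's structural result: infinitely many Berge knots arise as the image of a single distinguished component of a minimally twisted chain link after Dehn filling all of its other components, where the filling slopes may be taken to have length tending to infinity. Write $W$ for the hyperbolic link complement and $c_0$ for the distinguished cusp, so that the $j$-th member of the family is a knot $K_j$ with $S^3\setminus K_j$ obtained from $W$ by Dehn filling every cusp except $c_0$. By the hyperbolic Dehn surgery theorem, all but finitely many such fillings are hyperbolic, so after discarding finitely many terms we obtain an infinite sequence of hyperbolic Berge knots.

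Next I would pass to the geometric limit. Because the filling slopes have length tending to infinity, the hyperbolic Dehn surgery theorem gives that $S^3\setminus K_j$ converges geometrically to $W$, and that the geometry near the unfilled cusp converges; in particular the cusp cross-section tori of $K_j$ converge, up to bilipschitz distortion tending to $1$, to the cusp cross-section of $c_0$ in $W$. Two consequences follow: the cusp shape of $K_j$ converges in the Teichm\"uller space of the torus to the cusp shape of $c_0$ in $W$; and the maximal cusp torus of $S^3\setminus K_j$ (the boundary of the largest embedded horoball neighborhood of its single cusp) converges to the maximal cusp torus of $c_0$ in $W$, meaning the boundary of the largest horoball neighborhood of $c_0$ alone that remains embedded.

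Then I would identify the limiting data by a direct computation in the fixed manifold $W$. From Baker's explicit description of the minimally twisted chain link complement---equivalently, reading it off an ideal decomposition of $W$---the cusp $c_0$ has cusp shape equal to the similarity class $[T_0]$ containing the $1\times 2$ rectangle, and its maximal cusp torus is a $2\times 4$ rectangle. Combining this with the previous paragraph, for all sufficiently large $j$ the cusp shape of $K_j$ lies in the prescribed neighborhood $U$ of $[T_0]$, and the maximal cusps of the $K_j$ converge to the $2\times 4$ rectangle, which is the assertion of the proposition.

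I expect the main obstacle to be the convergence of maximal cusps in the second step. A maximal cusp is determined by the first moment at which an expanding horoball neighborhood fails to be embedded, and one must check that this condition passes correctly to the geometric limit despite the short core geodesics produced by the Dehn fillings: the Margulis tubes about those core geodesics are geometrically thick, but they do not obstruct self-overlap of the horoball neighborhood of the unfilled cusp, so that both the approximating and the limiting maximal cusps are governed by the same convergent configuration of horoball translates. A secondary, more bookkeeping-heavy point is extracting the precise similarity classes $1\times 2$ and $2\times 4$ from Baker's description of $W$; this is a finite computation, but it is the step for which we genuinely rely on \cite{Baker:LargeVolBerge}.
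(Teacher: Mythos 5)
Your first two steps (hyperbolicity of all but finitely many fillings, and convergence of both cusp shapes and maximal cusps of the unfilled cusp under geometric convergence) are sound and are exactly how the paper argues. The gap is in your third step, which you dismiss as ``a finite computation'': the claim that the distinguished cusp of the fixed, unaugmented minimally twisted chain link complement $W=S^3\setminus L_{2n+1}$ has shape exactly $[T_0]$ and maximal cusp exactly a $2\times 4$ rectangle is not something that can be read off Baker's paper, and it is in fact false for fixed $n$. For example, the minimally twisted $5$-chain link complement decomposes into regular ideal tetrahedra, so its cusp shapes lie in $\QQ(\sqrt{-3})$, which excludes the modulus $2i$ of the $1\times 2$ rectangle. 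This is precisely why the proposition is stated with a neighborhood $U$ and a limit rather than exact equalities for a fixed $W$: with $W$ fixed, your argument would only show that the maximal cusps of the Berge knots limit to the maximal cusp of that particular $L_{2n+1}$, which is close to, but not equal to, a $2\times 4$ rectangle, so the ``moreover'' clause would not follow.

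The paper's route around this is the real content of the proof. One passes to the \emph{augmented} links $L_{2n+1}^a$, which are built by belted sums of the augmented $2$- and $3$-chain links $L_2^a$ and $L_3^a$ along $3$-punctured spheres meeting the relevant cusp in two equidistant perpendicular curves; this cut-and-paste shows the chain-component cusps of $L_{2n+1}^a$ have shape exactly $[T_0]$ and maximal cusp exactly a $2\times 4$ rectangle for every $n$. One then observes that the meridian of the augmentation component (whose filling recovers $L_{2n+1}$) has length growing linearly in $n$, so by the hyperbolic Dehn surgery theorem the perturbation of the cusp shape and maximal cusp tends to zero as $n\to\infty$. The proposition then follows from a diagonal choice: take $n$ large and the remaining filling slopes long. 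To repair your proposal you would either need to carry out this augmentation/belted-sum computation (or an equivalent exact computation, e.g.\ via covers of the Whitehead link), and you must let the underlying chain link vary with $n$ rather than work in a single fixed $W$.
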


\begin{proof}
Baker showed that large volume Berge knots are obtained by Dehn filling the complement of a minimally twisted $(2n+1)$-chain link $L_{2n+1}$~\cite{Baker:LargeVolBerge}. For $V$ large, any Berge knot with volume at least $V$ is obtained by such a Dehn filling. Thus for fixed $n>0$, there is a sequence of such knots converging geometrically to $S^3\setminus L_{2n+1}$, and so the cusp shapes of this sequence of knots converge to a cusp shape of $S^3\setminus L_{2n+1}$.

We show that as $n\to\infty$, cusp shapes of $S^3\setminus L_{2n+1}$ approach $[T_0]$. By symmetry, cusp shapes of all cusps of $S^3\setminus L_{2n+1}$ are identical, so we need only show that one cusp shape approaches this similarity structure. 

The minimally twisted $(2n+1)$-chain link is obtained by Dehn filling a link obtained by the following procedure. First, augment the minimally twisted 3-chain link; the complement is a hyperbolic manifold with four cusps, three of which have shape $[T_0]$. Denote the link by $L_3^a$. See \reffig{Augmented3Chain}, obtained from SnapPy~\cite{SnapPy}. 
\begin{figure}
  \includegraphics[height=1.2in]{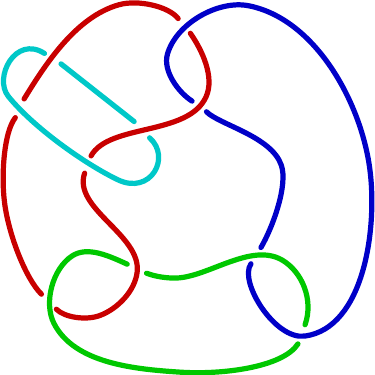}
  \hspace{.5in}
  \includegraphics[height=1.2in]{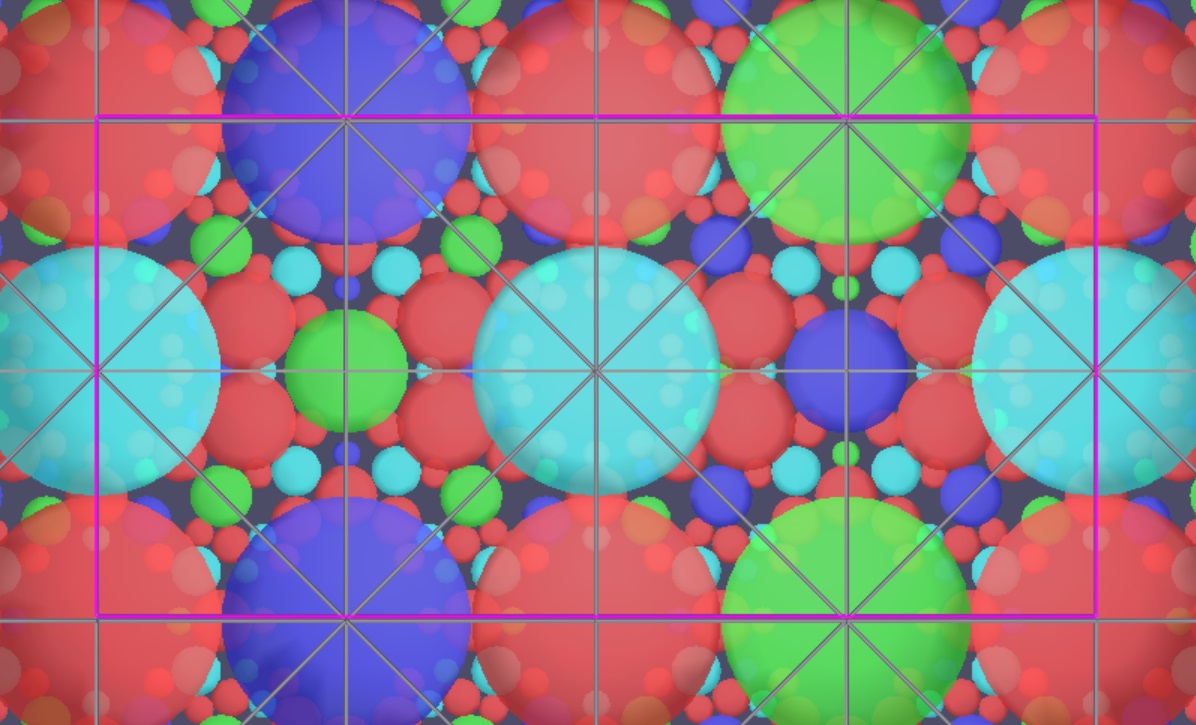}
  \caption{The augmented minimally twisted 3-chain link $L_3^a$ is shown on the left. It is hyperbolic, with each chain link component having the cusp shape of a $1\times 2$ rectangle, up to scale. One such cusp is shown on the right.}
  \label{Fig:Augmented3Chain}
\end{figure}
Next, take the augmented minimally twisted 2-chain link, denoted $L_2^a$, shown in \reffig{Augmented2Chain}. This is a hyperbolic manifold with three cusps, two of which have shape $[T_0]$.
\begin{figure}
  \includegraphics[height=1.2in]{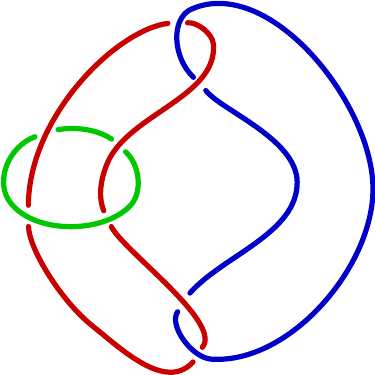}
  \hspace{.5in}
  \includegraphics[height=1.2in]{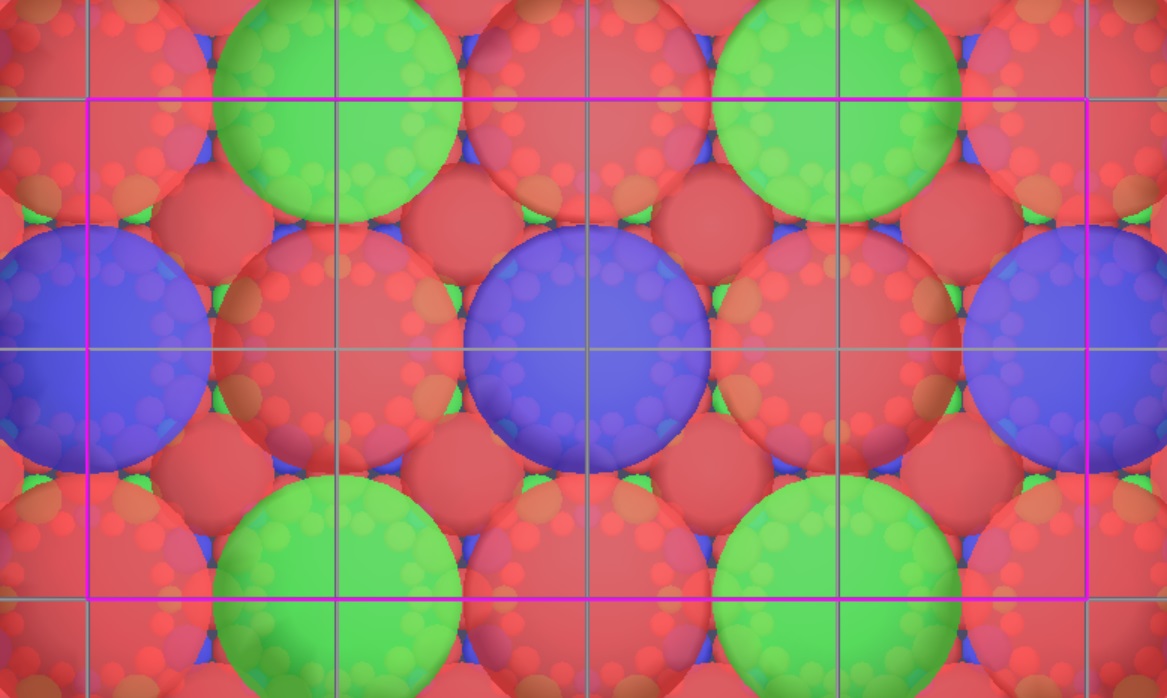}
  \caption{The augmented minimally twisted 2-chain link $L_2^a$ on the left. It is hyperbolic. Its two cusps coming from the chain, not the augmentation component, have cusp shape $[T_0]$.}
  \label{Fig:Augmented2Chain}
\end{figure}

We will take belted sums, as in Adams~\cite{Adams-85}. Recall that belted sum cuts two manifolds along embedded 3-punctured spheres and reglues by isometry of the 3-punctured spheres. Take the belted sum of $S^3\setminus L_3^a$ and $S^3\setminus L_2^a$, summing along the augmentation component of each. The result will be the augmented minimally twisted 5-chain link $L_5^a$. 
For both $L_3^a$ and $L_2^a$, the 3-punctured sphere we cut along meets the cusp under consideration exactly twice in a curve perpendicular to the cusp, with the two curves spaced equidistantly along the cusp. In \reffig{Augmented3Chain}, the 3-punctured sphere corresponds to the two curves running vertically across the cusp over the horoball colored cyan. Note they are equidistantly spaced and perpendicular to the boundary. In \reffig{Augmented2Chain}, the 3-punctured sphere runs over the green horoball. When we perform the belted sum, we cut along these curves and reassemble, gluing half of a cusp of $L_3^a$ to half of a cusp of $L_2^a$ to obtain a cusp of $L_5^a$, shown in \reffig{Augmented5Chain}. Because the cut happened halfway along each, along 3-punctured spheres meeting full-sized horospheres, the cusp shape is still $[T_0]$. 

\begin{figure}
  \includegraphics[height=1.5in]{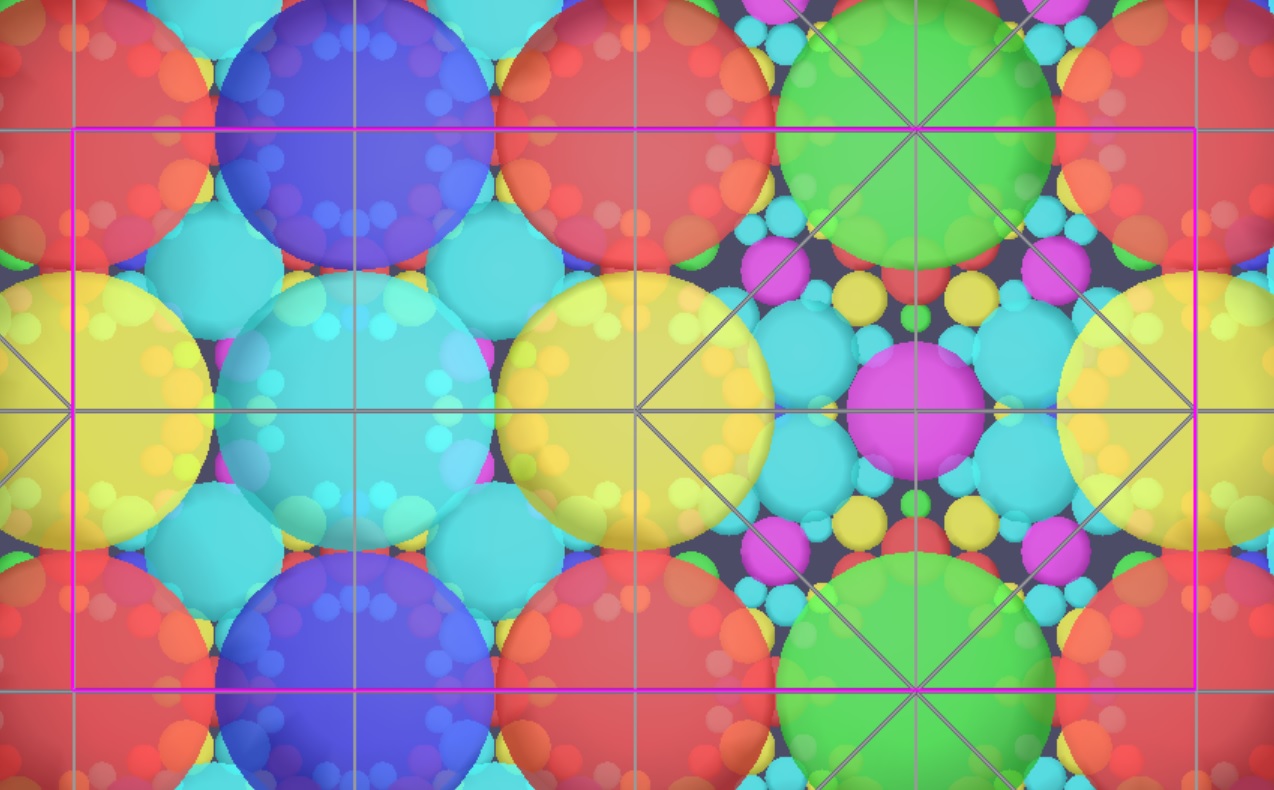}
  \caption{The cusp of a chain component of the minimally twisted 5-chain link is obtained by taking half of that of $L_3^a$ and half of that of $L_2^a$.}
  \label{Fig:Augmented5Chain}
\end{figure}

The cusp of the augmentation component under the belted sum is similarly changed, now by stacking the cusps corresponding to the augmentation components of $L_3^a$ and $L_2^a$ along 3-punctured spheres. We do not need to know the geometry of this cusp precisely, except that the length of the meridian of the cusp has increased. 

Now to obtain the augmented minimally twisted $(2n+1)$-chain link $L_{2n+1}^a$, inductively take the belt sum of $L_{2n-1}^a$ with $L_2^a$. As in the case of $L_5^a$, the cusp shape of the chain link component is obtained by cutting cusps of $L_{2n-1}^a$ and $L_2^a$ in half and assembling half of each into the new cusp. Hence its shape is still $[T_0]$. The cusp of the augmentation component is obtained by stacking the cusp corresponding to the augmentation components of $L_{2n-1}^a$ and $L_2^a$; this is equivalent to stacking one copy of the cusp of $L_3^a$ and $n-1$ copies of the cusp of $L_2^a$. Thus the meridian length grows linearly with $n$.

Note that Dehn filling the meridian of the augmentation component of $L_{2n+1}^a$ gives the minimally twisted chain link $L_{2n+1}$. Since the length of the meridian grows linearly with $n$, by the hyperbolic Dehn filling theorem for $n$ large enough, the cusp shape of $L_{2n+1}$ will lie in the neighborhood $U$ of $[T_0]$. Take a sequence of filling slopes on the other cusps of $L_{2n+1}$ that have high length and yield a Berge knot. Again the hyperbolic Dehn filling theorem implies that the cusp shape of the Berge knot lies in $U$.

Finally, because Berge knots are obtained by Dehn filling, which gives a geometrically convergent sequence, not only will their cusp shapes converge, but in addition, their maximal cusps will converge to the maximal cusp of the unfilled component of the augmented minimally twisted $(2n+1)$-chain link. The maximal cusp of a chain link component of $L_3^a$ and $L_2^a$ is a $2\times 4$ rectangle. (This can be shown via SnapPy~\cite{SnapPy} as in Figures~\ref{Fig:Augmented3Chain} and~\ref{Fig:Augmented2Chain}, or by hand.) The cutting and regluing in both cusps is along 3-punctured spheres meeting that maximal cusp in identical horocycle patterns. Thus the belted sums of these links have cusps that are $2\times 4$ rectangles, and the result follows. 
\end{proof}

\begin{remark}
As noted by the referee, in the proof of \refprop{Berge}, the hyperbolic structure on the augmented, minimally twisted $(2n-1)$-chain link complement can also be obtained by taking the $(2n-1)$-fold cover of the Whitehead link, then cutting along the 3-punctured sphere bounded by the augmented component, untwisting, and then regluing. Following this procedure using the geometry of the Whitehead link complement, for example as in~\cite{KPR}, leads to the same results on cusp shapes without appealing to belted sums.
\end{remark}

\bibliography{references}

\providecommand{\bysame}{\leavevmode\hbox to3em{\hrulefill}\thinspace}
\providecommand{\MR}{\relax\ifhmode\unskip\space\fi MR }
\providecommand{\MRhref}[2]{%
  \href{http://www.ams.org/mathscinet-getitem?mr=#1}{#2}
}
\providecommand{\href}[2]{#2}
\begin{thebibliography}{10}

\bibitem{Adams-85}
Colin~C. Adams, \emph{Thrice-punctured spheres in hyperbolic {$3$}-manifolds},
  Trans. Amer. Math. Soc. \textbf{287} (1985), no.~2, 645--656.

\bibitem{Agol:Bounds}
Ian Agol, \emph{Bounds on exceptional {D}ehn filling}, Geom. Topol. \textbf{4}
  (2000), 431--449. \MR{1799796 (2001j:57019)}

\bibitem{Baker:LargeVolBerge}
Kenneth~L. Baker, \emph{Surgery descriptions and volumes of {B}erge knots.
  {II}. {D}escriptions on the minimally twisted five chain link}, J. Knot
  Theory Ramifications \textbf{17} (2008), no.~9, 1099--1120. \MR{2457838}

\bibitem{Baker:Tunnel2}
\bysame, \emph{The poincar{\'e} homology sphere, lens space surgeries, and some
  knots with tunnel number two}, arXiv:1504.06682, 2015, with an appendix by
  Neil R. Hoffman.

\bibitem{BakerGordonLueckeIII}
Kenneth~L. Baker, Cameron Gordon, and John Luecke, \emph{Bridge number and
  integral {D}ehn surgery}, Algebr. Geom. Topol. \textbf{16} (2016), no.~1,
  1--40. \MR{3470696}

\bibitem{BleilerHodgson}
Steven~A. Bleiler and Craig~D. Hodgson, \emph{Spherical space forms and {D}ehn
  filling}, Topology \textbf{35} (1996), no.~3, 809--833. \MR{1396779
  (97f:57007)}

\bibitem{Brock-01}
Jeffrey~F. Brock, \emph{Iteration of mapping classes and limits of hyperbolic
  3-manifolds}, Invent. Math. \textbf{143} (2001), no.~3, 523--570.
  \MR{1817644}

\bibitem{BurtonPurcell}
Stephan~D. Burton and Jessica~S. Purcell, \emph{Geodesic systems of tunnels in
  hyperbolic 3-manifolds}, Algebr. Geom. Topol. \textbf{14} (2014), no.~2,
  925--952. \MR{3160607}

\bibitem{CanaryEpsteinGreen:Notes}
R.~D. Canary, D.~B.~A. Epstein, and P.~L. Green, \emph{Notes on notes of
  {T}hurston}, Fundamentals of hyperbolic geometry: selected expositions,
  London Math. Soc. Lecture Note Ser., vol. 328, Cambridge Univ. Press,
  Cambridge, 2006, With a new foreword by Canary, pp.~1--115. \MR{2235710}

\bibitem{CCHS:MaxCusps}
Richard~D. Canary, Marc Culler, Sa'ar Hersonsky, and Peter~B. Shalen,
  \emph{Approximation by maximal cusps in boundaries of deformation spaces of
  {K}leinian groups}, J. Differential Geom. \textbf{64} (2003), no.~1, 57--109.
  \MR{2015044}

\bibitem{CooperLackenbyPurcell}
Daryl Cooper, Marc Lackenby, and Jessica~S. Purcell, \emph{The length of
  unknotting tunnels}, Algebr. Geom. Topol. \textbf{10} (2010), no.~2,
  637--661. \MR{2606795}

\bibitem{SnapPy}
Marc Culler, Nathan~M. Dunfield, Matthias Goerner, and Jeffrey~R. Weeks,
  \emph{Snap{P}y, a computer program for studying the geometry and topology of
  $3$-manifolds}, Available at \url{http://snappy.computop.org} (2017).

\bibitem{Dang15}
Vinh~Xuan Dang, \emph{Compression bodies and their boundary hyperbolic
  structures}, Ph.D. thesis, Brigham Young University, 2015.

\bibitem{DunfieldHoffmanLicata}
Nathan~M. Dunfield, Neil~R. Hoffman, and Joan~E. Licata, \emph{Asymmetric
  hyperbolic {$L$}-spaces, {H}eegaard genus, and {D}ehn filling}, Math. Res.
  Lett. \textbf{22} (2015), no.~6, 1679--1698. \MR{3507256}

\bibitem{EudaveMunozLuecke}
Mario Eudave-Mu\~noz and J.~Luecke, \emph{Knots with bounded cusp volume yet
  large tunnel number}, J. Knot Theory Ramifications \textbf{8} (1999), no.~4,
  437--446. \MR{1697382}

\bibitem{FKP:Farey}
David Futer, Efstratia Kalfagianni, and Jessica~S. Purcell, \emph{Cusp areas of
  {F}arey manifolds and applications to knot theory}, Int. Math. Res. Not. IMRN
  (2010), no.~23, 4434--4497. \MR{2739802}

\bibitem{Hempel}
John Hempel, \emph{3-manifolds as viewed from the curve complex}, Topology
  \textbf{40} (2001), no.~3, 631--657. \MR{1838999}

\bibitem{Marden-Jorgensen}
T.~J{\o}rgensen and A.~Marden, \emph{Algebraic and geometric convergence of
  {K}leinian groups}, Math. Scand. \textbf{66} (1990), no.~1, 47--72.
  \MR{1060898 (91f:30068)}

\bibitem{KPR}
James Kaiser, Jessica~S. Purcell, and Clint Rollins, \emph{Volumes of chain
  links}, J. Knot Theory Ramifications \textbf{21} (2012), no.~11, 1250115, 17.
  \MR{2969643}

\bibitem{KeenMaskitSeries}
Linda Keen, Bernard Maskit, and Caroline Series, \emph{Geometric finiteness and
  uniqueness for {K}leinian groups with circle packing limit sets}, J. Reine
  Angew. Math. \textbf{436} (1993), 209--219. \MR{1207287}

\bibitem{KeenSeries}
Linda Keen and Caroline Series, \emph{The {R}iley slice of {S}chottky space},
  Proc. London Math. Soc. (3) \textbf{69} (1994), no.~1, 72--90. \MR{1272421}

\bibitem{KerckhoffThurston}
Steven~P. Kerckhoff and William~P. Thurston, \emph{Noncontinuity of the action
  of the modular group at {B}ers' boundary of {T}eichm\"uller space}, Invent.
  Math. \textbf{100} (1990), no.~1, 25--47. \MR{1037141}

\bibitem{Lackenby:Word}
Marc Lackenby, \emph{Word hyperbolic {D}ehn surgery}, Invent. Math.
  \textbf{140} (2000), no.~2, 243--282. \MR{1756996 (2001m:57003)}

\bibitem{LackenbyPurcell:ComprBodies}
Marc Lackenby and Jessica~S. Purcell, \emph{Geodesics and compression bodies},
  Exp. Math. \textbf{23} (2014), no.~2, 218--240. \MR{3223775}

\bibitem{Lehto}
Olli Lehto, \emph{Univalent functions and {T}eichm\"uller spaces}, Graduate
  Texts in Mathematics, vol. 109, Springer-Verlag, New York, 1987. \MR{867407}

\bibitem{MaherSchleimer}
Joseph Maher and Saul Schleimer, \emph{The compression body graph has infinite
  diameter}, arXiv:1803.06065, 2018.

\bibitem{McMullen:MaxCusps}
Curt McMullen, \emph{Cusps are dense}, Ann. of Math. (2) \textbf{133} (1991),
  no.~1, 217--247. \MR{1087348}

\bibitem{Morgan}
John~W. Morgan, \emph{On {T}hurston's uniformization theorem for
  three-dimensional manifolds}, The {S}mith conjecture ({N}ew {Y}ork, 1979),
  Pure Appl. Math., vol. 112, Academic Press, Orlando, FL, 1984, pp.~37--125.
  \MR{758464}

\bibitem{MSW:IndrasPearls}
David Mumford, Caroline Series, and David Wright, \emph{Indra's pearls},
  Cambridge University Press, New York, 2002, The vision of Felix Klein.
  \MR{1913879}

\bibitem{NamaziSouto}
Hossein Namazi and Juan Souto, \emph{Non-realizability and ending laminations:
  proof of the density conjecture}, Acta Math. \textbf{209} (2012), no.~2,
  323--395. \MR{3001608}

\bibitem{Nim-94}
B.~Nimershiem, \emph{Isometry classes of flat 2-tori appearing as cusps of
  hyperbolic 3-manifolds are dense in the moduli space of the torus},
  Proceedings of Low-Dimensional Topology (1994), 133--142.

\bibitem{Ohshika}
Ken'ichi Ohshika, \emph{Realising end invariants by limits of minimally
  parabolic, geometrically finite groups}, Geom. Topol. \textbf{15} (2011),
  no.~2, 827--890. \MR{2821565 (2012g:57035)}

\bibitem{ScharlemannTomova}
Martin Scharlemann and Maggy Tomova, \emph{Alternate {H}eegaard genus bounds
  distance}, Geom. Topol. \textbf{10} (2006), 593--617. \MR{2224466}

\bibitem{Teragaito}
Masakazu Teragaito, \emph{A {S}eifert fibered manifold with infinitely many
  knot-surgery descriptions}, Int. Math. Res. Not. IMRN (2007), no.~9, Art. ID
  rnm 028, 16. \MR{2347296}

\bibitem{Thurston-79}
W.~Thurston, \emph{The geometry and topology of three-manifolds}, Princeton
  Uni. Math. Dept. Notes, 1979.

\end{thebibliography}
\bibliographystyle{amsplain}

\end{document}